\numberwithin{equation}{section}
\newtheorem{thm}{Theorem}[section]
\newtheorem{cor}[thm]{Corollary}
\newtheorem{question}[thm]{Question}
\newtheorem{prop}[thm]{Proposition}
\newtheorem{lem}[thm]{Lemma}
\theoremstyle{remark}
\newtheorem{rem}[thm]{Remark}
\theoremstyle{definition}
\newtheorem{defn}[thm]{Definition}
\theoremstyle{remark}
\newtheorem{remark}[thm]{Remark}
\theoremstyle{property}
\newcommand{\R}{\mathbb{R}}
\newcommand{\Q}{\mathbb{Q}}
\newcommand{\Qc}{\mathcal{Q}}
\newcommand{\Z}{\mathbb{Z}}
\newcommand{\C}{\mathbb{C}}
\newcommand{\T}{\mathbb{T}}
\newcommand{\F}{\mathbb{F}}
\newcommand{\id}{\mathrm{id}}
\newcommand{\OP}{\operatorname}
\begin{document}

\title{On Legendrian products and twist spuns}

\author{Georgios Dimitroglou Rizell}
\author{Roman Golovko}

\begin{abstract}
The Legendrian product of two Legendrian knots, as defined by Lambert-Cole, is a Legendrian torus. We show that this Legendrian torus is a twist spun whenever one of the Legendrian knot components is sufficiently large. We then study examples of Legendrian products which are not Legendrian isotopic to twist spuns.
In order to do this, we prove a few structural results on the bilinearised Legendrian contact homology and augmentation variety of a twist spun.
In addition, we show that the threefold Bohr--Sommerfeld covers of the Clifford torus and Chekanov torus are not twist spuns.
\end{abstract}

\address{Department of Mathematics, Uppsala University, Box 480, SE-751 06, Uppsala, Sweden}
\email{georgios.dimitroglou@math.uu.se}

\address{Faculty of Mathematics and Physics, Charles University, Sokolovsk\'{a} 83, 18000 Praha 8, Czech Republic}
\email{golovko@karlin.mff.cuni.cz}
\date{\today}

\subjclass[2010]{Primary 53D12; Secondary 53D42}

\keywords{Legendrian product, twist spun, looseness, fillability}

\maketitle

\section{Introduction}

\subsection{Basic notions}
Here we mainly consider Legendrian submanifolds of the standard contact vector space $(\R^{2n+1},\alpha)$ with coordinates $(x_1,y_1,\dots, x_n, y_n,z)$, where $\alpha$ is the standard contact form $\alpha \coloneqq dz-\sum_i y_idx_i$, whose associated {\em Reeb vector field} thus is $\partial_z$. We will also consider general \emph{contactisations}, i.e.~contact manifolds $(P \times \R,\alpha)$ for general $2n$-dimensional Liouville manifold $(P^{2n},d\eta)$ with the contact form $\alpha=dz+\eta.$ A {\em Legendrian submanifold} is an $n$-dimensional smooth submanifold that satisfies $T_x\Lambda \subset \ker \alpha$ for all $x\in \Lambda,$ and a {\em Legendrian isotopy} is a smooth isotopy through Legendrian submanifolds.

Integral trajectories of the Reeb vector field $\partial_z$ which start and end on a Legendrian submanifold $\Lambda$ are called {\em Reeb chords} of $\Lambda$. The set of Reeb chords of $\Lambda$ will be denoted by $\Qc(\Lambda)$, which is a finite set in the case of a generic compact Legendrian. To each $c\in \Qc(\Lambda)$ we associate its length $\ell(c) \coloneqq \int_c \alpha>0$.

\begin{defn}
For Legendrian submanifolds $\Lambda_1$, $\Lambda_2$, we say that:
 \begin{itemize}
\item \emph{$\Lambda_1$ is smaller than $\Lambda_2$} and write $\Lambda_1<\Lambda_2$ if the length of the longest Reeb chord on $\Lambda_1$ is strictly smaller than the length of the shortest Reeb chord on $\Lambda_2$;
\item \emph{$\Lambda_1$ and $\Lambda_2$ have distinct Reeb chord lengths} if no Reeb chord of $\Lambda_1$ has the same length as a Reeb chord of $\Lambda_2.$
\end{itemize}
\end{defn}

Recall that an $n$-dimensional immersion $\iota \colon L \looparrowright (P,d\eta)$ is \emph{exact Lagrangian} if $\eta$ pulls back to an exact form $\iota^*\eta=df$. Any exact Lagrangian gives rise to a Legendrian \emph{immersion} by prescribing the lift $\{z \circ \iota=-f\}.$ Conversely, the {\em Lagrangian projection} is given as $\Pi_P: P \times \R \to P$ and sends a Legendrian submanifold to an exact Lagrangian immersion, whose double points moreover correspond bijectively to the Reeb chords.

In the case of $\R^{2n+1}$ we also recall the definition of the {\em front projection} given by $\Pi_{\OP{Fr}}: \R^{2n+1} \to \R^{n+1}$, $\Pi_{\OP{Fr}}(\mathbf{x},\mathbf{y},z)=(\mathbf{x},z)$. A Legendrian submanifold $\Lambda$ can be recovered from its front projection.

We are here interested in comparing two well-known geometric constructions that can be used to produce Legendrian submanifolds of higher dimension from lower dimensional ones.

\subsection{The Legendrian product}
\label{sec:LegPr}
\begin{defn}
 Consider two Legendrian submanifolds $$\iota_i \colon \Lambda_i \hookrightarrow (P_i \times \R,dz_i+\eta_i),\ i=1,2.$$
The {\em Legendrian product}
$\Lambda_1\boxtimes \Lambda_2 \looparrowright (P_1 \times P_2 \times \R,dz+\eta_1+\eta_2)$ is the Legendrian immersion defined by
$$\iota_1 \boxtimes \iota_2(u_1,u_2) = (\Pi_{P_1}(\iota_1(u_1)),\Pi_{P_2}(\iota_1(u_1)),z_1(\iota_1(u_1))+z_2(\iota_2(u_2)))$$
\end{defn}
The Legendrian product is \emph{embedded} if and only if the two Legendrians have distinct Reeb chord lengths. Legendrian products were introduced and studied in \cite{LegendrianProducts} by Lambert-Cole who, among other things, computed their classical invariants. Also see the work \cite{GeneratingFamilyLC} by the same author for results about existence of generating families for products as well as computations of certain Morse flow-trees.

\begin{remark}
The Legendrian isotopy class of $\Lambda_1\boxtimes \Lambda_2$ is only invariant under Legendrian isotopy of the two factors as long as the pair has distinct Reeb chord lengths for each time in the isotopy; in particular, the Legendrian product thus depends on the choice of the Legendrian \emph{embeddings} $\Lambda_i\hookrightarrow (P_i\times \R, dz_i+\eta_i)$ and not merely on their Legendrian isotopy classes.
\end{remark}
 Assume that $\Lambda_1 < \Lambda_2.$ In this case we can rescale the Reeb chords on $\Lambda_2$ to make them even longer by applying  the contact isotopy
\begin{gather*}
(P_2 \times \R,dz_2+\eta_2) \to (P_2 \times \R,e^{-t}(dz_2+\eta_2)),\\
(p,z_2) \mapsto (\varphi^t_{\eta_2}(p),e^tz_2),
\end{gather*}
induced by the lift of the Liouville flow. This has the effect of rescaling the length of each Reeb chord of $\Lambda_2$ by $e^t$; the Reeb chord lengths clearly remain distinct from those of $\Lambda_1$ throughout the isotopy. This rescaling  will be used repeatedly in constructions below.

The Chekanov--Eliashberg algebra of a Legendrian product is expected to contain more information than merely what is contained in the DGAs of the knots themselves \cite{LegendrianProducts}. For instance, pseudoholomorphic discs that have boundary on each knot and more than one positive puncture naturally enter the computation of the differential. Due to the transversality issues that arise in such considerations, we still lack a structural understanding of the DGAs of Legendrian products. For the same reason we have not been able to answer the following natural question:
\begin{question}
Is there a Legendrian torus in $\R^{5}$ which is not obtained as a Legendrian product?
\end{question}
We expect the answer to be negative; more precisely, we believe that the Legendrian tori from \cite{LegendrianBSLagrangianTori} that we also consider in Section \ref{sec:other} below are not products.

\subsection{The twist spun}
\label{sec:twistspun}
 Twist spuns are examples of Legendrian tori that have received somewhat more attention than Legendrian products.  Given a loop $\{\Lambda_\theta\},$ $\theta \in S^1,$ of Legendrian embeddings of $\Lambda$ in $(P \times \R,dz+\eta)$, the corresponding mapping torus has a natural Legendrian embedding $$\Sigma_{S^1}\{\Lambda_\theta\} \subset (\R^{2}\times P\times \R,dz-ydx+\eta)$$
 called the \emph{twist spun}  that was first constructed and studied by Ekholm--K\'{a}lm\'{a}n in \cite{IsotopiesTori}. There are also higher dimensional versions, which we recall in the general construction below.  In the special case when the loop of Legendrians is constant, we recover the so-called $S^k$-spun of Legendrians which first appeared in \cite{NonisotopicLegendrianSubmanifolds} for $k=1$ and later was generalised and studied in the case of all $k \ge 1$ by the second author in \cite{FrontSpinningConstruction}.

{ 
Let $\Lambda_{\theta}\subset (P \times \R, dz+\eta)$, $\theta \in S^k$, $k\geq 1$, be a $S^k$-family of Legendrian embeddings.
\begin{defn}
\label{def:susp}
The \emph{suspension} of the family $\Lambda_\theta \subset P \times \R$ is the unique Legendrian
$$\Sigma_{S^k}\{\Lambda_\theta\} \subset (P \times T^*S^k \times \R,dz+\eta-ydx)$$
determined by the property that its image under the canonical projection $$\Pi: P \times T^*S^k \times \R \to P \times S^k \times \R$$
is equal to
$$ \{ (x,\theta,z) \in P \times S^k \times \R;\:\: (x,z) \in \Lambda_\theta \},$$
which is an embedded submanifold.
\end{defn}

\begin{rem}
 The Legendrian condition $$(dz+\eta-ydx)|_{T\Sigma_{S^k}\{\Lambda_\theta\}}=0$$ implies that the momentum coordinate of $\Sigma_{S^k}\{\Lambda_\theta\}$ (the $y$--coordinate in the case $k=1$), and thus the Legendrian submanifold itself, can be uniquely recovered from its projection $\Pi(\Sigma_{S^k}\{\Lambda_\theta\})$.
\end{rem}

\begin{rem}
 
The suspension $\Sigma_{S^k}\{\Lambda_\theta\}$ is diffeomorphic to the mapping torus which is induced by the family of embeddings; this is the total space of a fibre bundle over $S^k$ with fibres diffeomorphic to $\Lambda_\theta$.
\end{rem}
There is a canonical exact symplectic inclusion
$$T^*(\R \times S^k) \subset T^*(\R^{k+1})$$
induced by the smooth identification $$\R \times S^k \cong \R^{k+1}\setminus \{0\}\subset \R^{k+1}$$
where $\{t\} \times S^k$ is mapped to the sphere of radius $e^t$. In the case when $P$ is a subcritical Liouville domain, i.e.~when there exists a choice of splitting
$$P=Q\times \R^{2}=Q \times T^*\R,$$ we can use this inclusion to construct a strict contact embedding
$$ \label{neighbPxcodb}
P \times T^*S^k \times \R = Q\times T^*(\R \times S^k)\times \R \hookrightarrow Q\times T^*(\R^{k+1}) \times \R = (P \times \R^{2k} \times \R,dz+\eta-y_idx_i) $$
of the corresponding contactisations. In the first equality we have made use of the canonical identification
$$T^*\R \times T^*S^k=T^*(\R \times S^k)$$
while
$$T^*\R^{k+1}=T^*\R \times T^*\R^k=T^*\R \times \R^{2k}$$
was used in the last equality. 
\begin{defn}
The image of the Legendrian suspension $\Sigma_{S^k}\{\Lambda_\theta\} \subset P \times \R^{2k+1}$ under the canonical strict contact embedding \eqref{neighbPxcodb} is called the {\em $S^k$-twist spun of the family $\{\Lambda_{\theta}\}_{\theta \in S^k}$}.
\end{defn}
From now on we call the $S^k$-twist spun of $\{\Lambda_{\theta}\}$, $\theta\in S^k$, simply the {\em twist spun of $\Lambda_{\theta}$} and in order to simplify the notations, when no confusion can arise, we omit $S^k$ in the notation and simply write $\Sigma\{\Lambda_\theta\}$.

\begin{remark}
The twist spinning construction can be seen as a version of the Lagrangian bundle construction due to Audin--Lalonde--Polterovich \cite{Audin-Lalonde-Polterovich}; namely, it embeds the suspension inside a neighbourhood of a subcritical isotropic sphere. In Section \ref{sec:proofs} we give an alternative construction of the twist spun for which this relationship becomes more apparent. This construction also has the advantage that it can be applied to general (not necessarily subcritical) exact symplectic manifolds $P$.
\end{remark}

For most of the results in this paper we will only need the case when $P$ is the standard symplectic vector space,  i.e.~the subcritical Liouville domain
$$ P=\R^{2n}=\R^{2(n-1)}\times T^*\R.$$
In this case the $S^1$-twist spun (i.e.~$k=1$)  has the following more direct description. 
\begin{lem}
The twist spun $\Sigma_{S^1}\{\Lambda_\theta\}$ inside $\{(\tilde{\mathbf{x}},\tilde{\mathbf{y}},\tilde{z})\} = \R^{2(n+1)+1}$ can be explicitly expressed as follows. Given the parametrisation $(\mathbf{x}(\theta,q),\mathbf{y}(\theta,q),z(\theta,q)) \in \R^{2n+1}$ in locally defined coordinates on the mapping torus, we can write
$$
\begin{cases}
 & \tilde{\mathbf{x}}=(x_1(\theta,u)\cos{\theta}, x_1(\theta,u)\sin{\theta},x_2(\theta,u),\ldots,x_{n}(\theta,u)),\\
 & \tilde{\mathbf{y}}=(y_1(\theta,u)\cos{\theta}-\partial_\theta z(\theta,u)\sin{\theta}, y_1(\theta,u)\sin{\theta}+\partial_\theta z(\theta,u)\cos{\theta},y_2(\theta,u),\ldots,y_{n}(\theta,u)),\\
 & \tilde{z}=z(\theta,u).
\end{cases}
$$
\end{lem}

One very useful property of the above construction is that the Chekanov--Eliashberg algebra of the resulting Legendrian can be explicitly determined in terms of the Chekanov--Eliashberg algebra of the original one by a type of K\"{u}nneth formula; see \cite{IsotopiesTori} which we recall in Section \ref{sec:spundga} below, as well as the partial results in high dimension by the authors in \cite{EstimatingReebChordsCharacteristicAlgebra}. (In fact, in the case of $S^1$-spuns, the DGA is closely related to the so-called Baues--Lemaire cylinder object in the category of DG-algebras from \cite{Models}; it can be interpreted as the $S^1$-analogue.)

\subsection{Results}

 Let $W^k \subset \R^{2k+1}$ be the standard Legendrian $k$-sphere which has a single transverse Reeb chord, and whose front projection is the rotation symmetric ``flying saucer'' with a single cusp edge; see Figure \ref{whitneyimmersion} for the cases $k=1,2$.  It was shown by Lambert-Cole in \cite[Corollary 1.5] {LegendrianProducts} that the Legendrian product $\Lambda\boxtimes W^k$ is Legendrian isotopic to the front $S^k$-spun of $\Lambda$ whenever $\Lambda<W^{k}$ is satisfied. One of our goals is to extend Lambert-Cole's result by showing that it remains true also if $W^k$ is replaced by Legendrian spheres of a certain more general type. In order to do this,  we are in need of the following definition:
\begin{defn}
We say that two Legendrians $\Lambda,\Lambda' \subset (Y,\alpha)$ of a contact manifold are $n$-stably formally Legendrian isotopic if their stabilisations
$$\{0\} \times \Lambda, \{0\} \times \Lambda' \subset (\R^{2n}\times Y,-ydx+\alpha)$$
are formally isotopic as subcritical isotropic manifolds.
\end{defn}
\begin{remark}\begin{enumerate}
\item  Gromov's $h$-principle for subcritical isotropic embeddings \cite[12.4.1]{hprinciple} together with the isotropic normal neighbourhood theorem \cite{Geiges} readily imply that the following is satisfied for two Legendrians which are $n$-stably formally Legendrian isotopic: After stabilising each Legendrian by a suitable family of rotating Legendrian $n$-planes (the rotations depend on the comparison of framings of the symplectic normal bundles of the two isotropic embeddings), the open Legendrians obtained are Legendrian isotopic. This is exploited in the proof of Theorem \ref{thm:normalbundle} in Section \ref{sec:proofs1} below.
\item Assume that $n \ge k> 0$.
Since $\pi_k (U(k+n)/U(n))=0$ is satisfied in this case, it readily follows that all Legendrian $k$-dimensional spheres are $n$-stably formally Legendrian isotopic. In particular, all Legendrian knots are $n$-stably formally Legendrian isotopic when $n \ge 1$.
\item The property of being $n$-stably formally Legendrian isotopic automatically implies being $n+1$-stably formally Legendrian isotopic. In particular, $0$-stably formally Legendrian isotopic (which is the same as formally Legendrian isotopic) implies $n$-stably formally Legendrian isotopic for all $n \ge 0$. The converse, however, does not hold in general. 
%Since $\pi_k (U(k+n)/U(n))=0$ for $n \ge k> 0$, it readily follows that all Legendrian $k$-dimensional spheres are $n$-stably formally Legendrian isotopic under the same assumptions. In %particular, all Legendrian knots are $n$-stably Legendrian isotopic when $n \ge 1$.
\end{enumerate}
\end{remark}

The result of Lambert-Cole admits the following generalisation:

\begin{thm}
 
\label{thm:normalbundle}
Let $\Lambda_1 \subset (\R^{2n_1+1}, \alpha)$ be a Legendrian submanifold and $\Lambda_2 \subset (\R^{2n_2+1}, \alpha)$ a Legendrian sphere which is $n_1$-stably formally Legendrian isotopic to the standard sphere $W^{n_2}$. If $\Lambda_1 < \Lambda_2,$ then it follows that $\Lambda_1 \boxtimes \Lambda_2$ is Legendrian isotopic to a twist spun of an $S^{n_2}$-family of Legendrians which is obtained from $\Lambda_1$ by a suitable family of rotations, each being a contact lift of the linear symplectic $U(n_1)$-action. Furthermore: 
\begin{itemize}
 
\item In the case when $\Lambda_2$ is formally Legendrian isotopic to the standard sphere $W^k$, the product is Legendrian isotopic to the ordinary spun of $\Lambda_1$, i.e.~the twist-spun $\Sigma\{\Lambda_1\}$ of the constant family. 
\item  In the case when $n_2=1$ it follows that $\Lambda_1 \boxtimes \Lambda_2$ is Legendrian isotopic to the twist spun of the family of Legendrians that covers the image of $\Pi_{\R^{2n_1}}(\Lambda_1)$ under the family
$$ (z_1,z_2,\ldots,z_{n_1}) \mapsto (e^{i\OP{rot}{(\Lambda_2)}\cdot\theta}z_1,z_2,\ldots,z_{n_1}), \:\: \theta \in S^1,$$
of rotations.
\end{itemize}
\end{thm}

Given an exact Lagrangian cobordism $ L \subset (\R \times P_1 \times \R,d(e^t(dz+\eta_1)))$ from $\Lambda^-_1$ to $\Lambda^+_1 \subset P_1 \times \R$ (see Section \ref{sec:cob} below) we can again form a product
$$L\boxtimes \Lambda_2 \looparrowright (\R \times P_1 \times P_2 \times \R, d(e^t(dz+\eta_1+\eta_2))),$$
which similarly to the Legendrian product we define to be the Lagrangian immersed cobordism
$$\iota_1 \boxtimes \iota_2(u_1,u_2)=(\Pi_{\R \times P_1}(\iota_1(u_1)),\Pi_{P_2}(\iota_2(u_2)),z_1(\iota_1(u_1))+z_2(\iota_2(u_2)))$$
from $\Lambda^-_1 \boxtimes \Lambda_2$ to $\Lambda^+_1 \boxtimes \Lambda_2.$ Note that one still can speak about Reeb chords on $L,$ by which we mean an integral curve of the Reeb vector field $\partial_z$ that is contained inside some slice $\{t\} \times P_1 \times \R$ and which has endpoints on $L$. In other words, the condition that $L$ and $\Lambda_2$ have distinct Reeb chord lengths still makes sense.
\begin{thm}
\label{thm:filproductlooseproducttwist}
\begin{itemize}
\item[(i)] The cobordism $L \boxtimes \Lambda_2$ is an immersed exact Lagrangian cobordism from $\Lambda^-_1 \boxtimes \Lambda_2$ to $\Lambda^+_1 \boxtimes \Lambda_2$ which is embedded when the Reeb chord lengths of $L$ are distinct from those of $\Lambda_2$; and
\item[(ii)] If $\Lambda_1<\Lambda_2$ and $\Lambda_1$ is stabilised or loose, then $\Lambda_1\boxtimes \Lambda_2$ is loose as well.
\end{itemize}
\end{thm}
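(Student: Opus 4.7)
For part (i), I would carry out a direct verification parallel to the embedded Legendrian product case: exactness of $L \boxtimes \Lambda_2$ follows by summing the primitives inherited from $L$ and from $\Lambda_2$, while the cylindrical ends of $L$ yield cylindrical ends of the product modelled on $\Lambda^\pm_1 \boxtimes \Lambda_2$. By the explicit formula for $\iota_1\boxtimes\iota_2$, a double point requires a pair of points on $L$ lying in the same slice $\{t\}\times P_1\times\R$ and projecting to the same point of $P_1$ (i.e.\ tracing a Reeb chord of $L$), a pair of points on $\Lambda_2$ projecting to the same point of $P_2$ (tracing a Reeb chord of $\Lambda_2$), and a cancellation of the $z$-shifts---which forces the two Reeb chord lengths to agree. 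Hence embeddedness holds precisely when $L$ and $\Lambda_2$ have no common Reeb chord lengths. Under $\Lambda_1^\pm<\Lambda_2$, the chord lengths of $L$ are bounded above by some $M$ (outside a compact piece $L$ is cylindrical on $\Lambda_1^\pm$, while the compact piece carries only finitely many chords), so applying $(\varphi^t_{\theta_2},e^t\cdot)$ to $\Lambda_2$ with $t\gg 0$ makes all of its chord lengths exceed $M$.

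For part (ii), the plan is to combine Theorem~\ref{thm:normalbundle} with a local analysis of loose and stabilising charts. By Theorem~\ref{thm:normalbundle}, $\Lambda_1\boxtimes\Lambda_2$ is Legendrian isotopic to a twist spun of $\Lambda_1$ parametrised over a sphere $S^{n_2}$ (an $S^1$ when $n_2=1$). From the explicit suspension construction, over any small open disc $D\subset S^{n_2}$ the twist spun is contactomorphic to a product $\Lambda_1^{\mathrm{small}}\times D$ sitting inside a standard neighbourhood $(\R^{2n_1+1}\times T^*D\times\R,\alpha+\theta_D)$, where $\Lambda_1^{\mathrm{small}}$ is a Liouville-rescaled copy of $\Lambda_1$. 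It therefore suffices to show that the product of a loose chart (respectively, stabilising arc) in $\Lambda_1^{\mathrm{small}}$ with such a disc $D$ is itself a loose chart in $\Lambda_1\boxtimes\Lambda_2$. In the loose case this is immediate, since adding trivial cotangent disc factors to the base of a loose chart preserves the loose chart condition. In the merely stabilised case one appeals to Murphy's theorem: in total dimension $n_1+n_2\ge 2$, the product of a stabilising arc with a disc of sufficient aspect ratio is a loose chart.

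The main obstacle is to ensure that the aspect ratio required in the definition of a loose chart is attained after the product is formed. This is handled by the freedom in the Liouville rescaling: applying the backwards flow $(\varphi^{-t}_\theta,e^{-t}\cdot)$ to $\Lambda_1$ with $t$ sufficiently large contracts the relevant chart of $\Lambda_1$ to arbitrarily small size while preserving its contactomorphism type, so that after taking the product with any fixed disc $D$ the requisite aspect ratio condition is satisfied automatically.
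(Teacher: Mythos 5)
Part (i) of your plan coincides with the paper's argument: the paper verifies exactness via the pull-back computation $(\iota_1\boxtimes\iota_2)^*e^t(dz+\theta_1+\theta_2)=\iota_1^*(e^t(dz_1+\theta_1))$ (the $\Lambda_2$-term vanishes since $\iota_2$ is Legendrian), and the embeddedness and rescaling discussion is exactly as you describe.

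For part (ii) you take a genuinely different route. The paper argues directly on the product: it places the loose (or stabilising) chart $R_{abc}\subset P_1\times\R$ of $\Lambda_1$ alongside a flat Darboux box $U\cong[-\epsilon,\epsilon]^{2n_2}$ of $\Pi_{P_2}(\Lambda_2)$ over which $z_2$ is constant, and then applies the \emph{positive} Liouville flow to $\Lambda_2$, turning $U$ into $[-e^{t/2}\epsilon,e^{t/2}\epsilon]^{2n_2}$; the resulting product chart is loose because the extra widths $b,c$ are inflated to be arbitrarily large while $a$ stays fixed. You instead first invoke Theorem \ref{thm:normalbundle} to recognise the product as a twist spun, restrict to a disc $D$ in the base sphere, and achieve $a<bc$ by \emph{deflating} $a$ via the backwards Liouville flow on $\Lambda_1$. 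Both rescalings are legitimate Legendrian isotopies of the product under $\Lambda_1<\Lambda_2$, and both produce the required aspect ratio, so your argument does go through; but note the trade-offs. Your route imports the full strength of Theorem \ref{thm:normalbundle} (hence the $h$-principle for subcritical isotropics) where nothing of the sort is needed; it is confined to the setting $P_i=\R^{2n_i}$ in which that theorem is proved, whereas the paper's chart-product argument works verbatim for arbitrary contactisations; and it needs the small additional step that the family $\Lambda_\theta$ can be homotoped to be constant over $D$ so that the suspension really is a product $\Lambda_1^{\mathrm{small}}\times D$ there. Finally, the fact you attribute to Murphy's theorem---that a stabilising arc times a disc of sufficient aspect ratio is a loose chart---is simply the definition of a loose chart; Murphy's $h$-principle is not required anywhere in this proof.
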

\begin{remark}
 
In the case when $\Lambda^{\pm}_1<\Lambda_2$ we can always arrange so that the assumptions in Part (i) of the above theorem are met. Namely, we can then perform an initial rescaling of $\Lambda_2$ by applying the contact isotopy $(\varphi^{t}_{\eta_2},e^{t}\cdot)$ induced by the positive Liouville flow on $(P_2,d(\eta_2))$. Since $\Lambda^{\pm}_1<\Lambda_2$, we obtain Legendrian isotopies of the two products $\Lambda^{\pm}_1 \boxtimes \Lambda_2$ and, after a sufficiently large such rescaling, the Reeb chords on $\Lambda_2$ all become greater than the chords on the cobordism.
\end{remark}

The main aim of this note is to give an example of a Legendrian product which is \emph{not} Legendrian isotopic to a twist spun:
\begin{thm}
\label{thm:productnotspunnottwistspun}
There exist Legendrian product tori $\Lambda=\Lambda_1 \boxtimes \Lambda_2\subset \R^5$ which are not Legendrian isotopic to any twist spun of a family of Legendrian knots.
\end{thm}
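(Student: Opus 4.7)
The plan is to invoke the structural constraints on the bilinearised Legendrian contact homology and augmentation variety of twist spuns that are announced in the abstract (and proved earlier in the paper), and to exhibit a Legendrian product $\Lambda_1\boxtimes\Lambda_2\subset\R^5$ for which these constraints fail. Since by Theorem~\ref{thm:normalbundle} any product with $\Lambda_1<\Lambda_2$ is already a twist spun, the two knots must be chosen with genuinely interleaving Reeb chord lengths, i.e.\ in a way that no rescaling by the Liouville flow on a single factor can achieve $\Lambda_1<\Lambda_2$ or $\Lambda_2<\Lambda_1$.

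For $\Lambda_1$ and $\Lambda_2$ I would take Legendrian representatives of knot types whose Chekanov--Eliashberg DGA is well understood and admits several augmentations with non-isomorphic bilinearised complexes; natural candidates are the Chekanov pair in the knot type $m(5_2)$, or a pair of Legendrian $(2,n)$-torus knots with suitable rotation numbers. After a preliminary rescaling by the Liouville flow, one arranges the Reeb chord lengths to be distinct (so that $\Lambda_1\boxtimes\Lambda_2$ is embedded) but interleaved, so that Theorem~\ref{thm:normalbundle} does not apply. The obstruction to being a twist spun is then, morally, that the Ekholm--K\'alm\'an K\"unneth description of the DGA of a twist spun forces its bilinearised LCH to contain a tensor factor modelled on $H^*(S^1)$; in particular, the Poincar\'e polynomial of its bilinearised LCH must be divisible by $(1+t)$, and dually its augmentation variety must fibre over the augmentation variety of the underlying loop. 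The plan is to compute the Poincar\'e polynomial of the bilinearised LCH of $\Lambda_1\boxtimes\Lambda_2$ for a judiciously chosen pair of pulled-back augmentations and to verify that this divisibility fails, or equivalently that the augmentation variety does not admit the predicted fibration.

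The main obstacle is precisely the one explicitly highlighted in the paper: the DGA of a Legendrian product is not fully known, because pseudoholomorphic discs with several positive punctures at mixed Reeb chords enter the differential and lack a controllable transversality theory. The plan to circumvent this is twofold. First, one restricts attention to augmentations of the form $\varepsilon=\varepsilon_1\otimes\varepsilon_2$ pulled back from augmentations of the two factors, for which the counts entering the bilinearised complex simplify drastically and many multi-puncture mixed contributions are forced to vanish. Second, one exploits the action filtration associated to the interleaving of Reeb chord lengths: by tuning the relative scales of $\Lambda_1$ and $\Lambda_2$, the remaining problematic disc contributions are confined to a filtration layer that is separated from the generators producing the obstruction, so that at the level of the bilinearised LCH one obtains an exact answer up to the relevant action window. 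A finite computation over the shortest chords in each factor should then suffice to contradict the $(1+t)$-divisibility, and hence the twist-spun structure, for the specific product built above.
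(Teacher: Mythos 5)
Your high-level strategy is the right one, and your $(1+t)$-divisibility heuristic is in fact the correct shadow of the paper's key mechanism: Part~(3) of Theorem~\ref{thm:Kunnethformula} exhibits $LCH^{\tilde\varepsilon}_*$ of a twist spun as $H(\OP{Cone}(\psi))$ for a chain map $\psi$ between complexes with \emph{isomorphic} homology, and the long exact sequence of the cone then forces exactly the parity/divisibility constraint you describe. But note that this constraint is only available under the additional hypothesis that the augmentation of the twist spun is unique up to DG-homotopy (this is what forces $\varepsilon\circ\Phi\simeq\varepsilon$, hence the two sides of the cone to have the same homology). Your proposed examples -- the Chekanov pair in $m(5_2)$, $(2,n)$-torus knots -- are chosen precisely because they have rich augmentation categories, which would tend to destroy this hypothesis and with it the obstruction in the clean form you want to use. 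More seriously, your plan still requires computing (a piece of) the bilinearised LCH of a Legendrian product of two nontrivial knots, and the two workarounds you offer are themselves unproven structural claims about product DGAs: there is no established ``pulled-back'' or tensor-product augmentation $\varepsilon_1\otimes\varepsilon_2$ for $\Lambda_1\boxtimes\Lambda_2$, and no Künneth-type control of the mixed multi-positive-puncture discs even in a bounded action window. This is exactly the transversality wall the paper flags as open, so as written the computation at the heart of your argument cannot be carried out.

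The paper's resolution is to engineer the example so that \emph{no} differential ever needs to be computed. One takes $\Lambda_1$ the standard unknot with one chord of length $1$, and $\Lambda_2^{2r}$ a cusp connect sum of two unknots of rotation numbers $\pm r$, with a single short chord $b$ of degree $2r$ and all other chords long (so the lengths interleave and Theorem~\ref{thm:normalbundle} does not apply). Proposition~\ref{prop:Degrees} uses the $h$-principle argument from the proof of Theorem~\ref{thm:normalbundle} to isotope the product to a representative with only six transverse chords, in degrees $1,2,2r,2r+1,2r+2$ and $1-2r$. For $r\ge 1$ there is no degree-$0$ chord and a unique degree-$1$ chord, so (given existence) the graded augmentation is unique and trivial (Corollary~\ref{cor:UniqueAug}); existence comes from an explicit exact Lagrangian filling built via the cobordism product of Theorem~\ref{thm:filproductlooseproducttwist}(i) (Proposition~\ref{prp:filling}). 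By pure degree reasons $LCH^\varepsilon_*$ in nonpositive degrees is $\F$ concentrated in the single odd degree $1-2r$, and the cone long exact sequence then yields the contradiction. If you want to salvage your approach, you should redesign the factors so that the augmentation of the product is forced to be unique and the relevant homology groups are determined by degree and action considerations alone, rather than by disc counts.
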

For instance $\Lambda_1$ can be taken to be the standard unknot shown to the left in Figure \ref{fig:NonspunProduct} while $\Lambda_2$ is the particular version of the stabilised unknot shown in the same figure to the right.

In addition, in Section \ref{sec:other} we prove that the threefold Bohr--Sommerfeld covers of the Clifford torus and Chekanov torus discussed by the authors in \cite{LegendrianBSLagrangianTori} are not twist spuns. Finally, we conjecture that no threefold Bohr--Sommerfeld cover of one of Vianna's monotone Lagrangian tori in $\C P^2$ is a twist spun; this infinite family of Lagrangian tori in pairwise distinct Hamiltonian isotopy classes was constructed in \cite{Vianna16}.

\section*{Acknowledgements}
This project started when the second author attended the 39th Winter School on Geometry and Physics in Srni, and we are grateful to the organisers of the workshop for their hospitality.
In addition, we would like to thank Peter Lambert-Cole and Dmitry Tonkonog for useful discussions.
The first author is supported by the grant KAW 2016.0198 from the Knut and Alice Wallenberg Foundation.
The second author is supported by the GA\v{C}R EXPRO Grant 19-28628X.

\section{Background}
Here we consider some basic notions from the theory of Legendrian submanifolds. In particular, we introduce two important and distinct classes of Legendrian submanifolds: loose Legendrian submanifolds and fillable Legendrian submanifolds. These two classes have very different properties. Loose Legendrian submanifolds belong to ``flexible contact topology.'' More precisely, they satisfy an h-principle due to Murphy \cite{LooseLegendrianEmbeddings}. Fillable Legendrian submanifolds of contactisations belong to ``rigid contact topology''; the Chekanov--Eliashberg algebra is a powerful Legendrian isotopy invariant that has been shown to be an efficient tool for distinguishing such Legendrians.  We end the section by recalling the necessary background on this invariant.

\subsection{Loose Legendrians}
The class of {\em loose Legendrian} submanifolds were introduced by Murphy in \cite{LooseLegendrianEmbeddings}, where they were shown to satisfy an $h$-principle. In particular, they are classified up to Legendrian isotopy by their topological properties. We now recall the definition.

\begin{figure}[h!]
\begin{center}
\labellist
\pinlabel $z$ at 128 540
\pinlabel $a$ at 135 520
\pinlabel $1$ at 185 480
\pinlabel $-1$ at 65 480
\pinlabel $x$ at 205 488
\pinlabel $-a$ at 135 457
\endlabellist
\includegraphics[height=4.5cm]{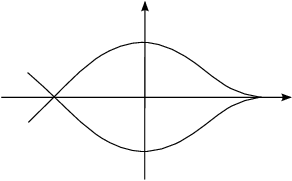}
\caption{The front projection of
$\gamma$.}\label{looseproj}
\end{center}
\end{figure}

We say that a Legendrian submanifold $\Lambda\subset (P^{2n} \times \R, \alpha)$, $n\geq 2$, is {\emph loose} if there exists a pair of neighbourhoods $(U,\Lambda_0) \subset (P\times \R,\Lambda)$ that admits a contactomorphism to the {\em standard loose chart} $(R_{abc}, \Lambda_{0})$ with $a<bc$.
Here $R_{abc} \subset (\R^{2n+1},dz-y_idx_i)$ is a standard Darboux neighbourhood defined by
\begin{align*}
R_{abc} =& \{ (x,y, x_{1},\dots, y_{n-1}, z);\:\: |x|,|y|\leq 1, \|(x_{1},\dots,x_{n-1})\|\leq b,\\ &\|(y_{1},\dots,y_{n-1})\|\leq c, |z|\leq a )\}\subset (\R^{2n+1}, \alpha)
\end{align*}
and $\Lambda_{0}$ is the Legendrian solid cylinder, which is the product of
\begin{align*}
D_{b} = \{ (x_{1},y_{1},\dots,x_{n-1},y_{n-1}); \:\: y_{1} = \dots = y_{n-1} = 0, \|(x_{1},\dots,x_{n-1})\|\leq b \}
\end{align*}
and a Legendrian curve $\gamma
\subset \R^{3}$ with coordinates $(x,y,z)$ and whose front projection is described in Figure~\ref{looseproj}; in particular, the Legendrian arc $\gamma$ is contained inside the box
\begin{align*}
Q_{a}=\{|x|\leq 1, |y|\leq 1, |z|\leq a\}
\end{align*}
with $\partial \gamma\subset \partial Q_{a}$.

\subsection{Lagrangian cobordisms and fillings}
\label{sec:cob}

An \emph{exact Lagrangian cobordism from $\Lambda^-$ to $\Lambda^+ \subset P \times \R$} is a properly embedded $(n+1)$-dimensional submanifold
$$ L \subset (\R \times P^{2n} \times \R,d(e^t(dz+\eta)))$$
which
\begin{itemize}
\item coincides with a cylinder over $\Lambda^+$ inside $[T,+\infty) \times P \times \R$,
\item coincides with a cylinder over $\Lambda^-$ inside $(-\infty,-T] \times P \times \R$, and
\item is exact Lagrangian in the sense that $e^t(dz+\eta)|_{TL}$ is exact with a globally constant primitive on $(-\infty,-T] \times \Lambda^- \subset L.$
\end{itemize}
We also allow the case when $\Lambda^{-}=\emptyset$; if this holds then we call $L$ an {\em exact Lagrangian filling of $\Lambda^{+}$}, and we say that $\Lambda^{+}$ is {\em (exact) fillable}.

\subsection{The Chekanov--Eliashberg algebra}

The Chekanov--Eliashberg algebra is a Legendrian invariant introduced by Chekanov \cite{DiffAlg} and Eliashberg \cite{Eliash}, and is a part of the Symplectic Field Theory \cite{IntroSFT} by Eliashberg--Hofer--Givental. The version that we use for Legendrians of contactisations of Liouville domains is due to Ekholm--Etnyre--Sullivan \cite{LegendrianContactPxR}. We proceed to sketch the definition, and refer to the latter article for more details.

The Chekanov--Eliashberg algebra of a closed Legendrian $\Lambda \subset P \times \R$ is a noncommutative semifree DGA $(\mathcal{A}(\Lambda),\partial)$ generated by the Reeb chords on $\Lambda$ over the group ring $\F[H_1(\Lambda)]$. In this article we may restrict attention to $\F=\Z_2,$ but when $\Lambda$ is spin we can also take e.g.~$\F=\Q$ or $\C.$ In the case of Legendrians diffeomorphic to $(S^1)^k$, which is our main interest here, we get an identification of $\F[H_1(\Lambda)]$ with the Laurent polynomial ring in $k$ variables over $\F$. 

The degree of a Reeb chord generator $c$ is determined by the so-called Conley--Zehnder index via $|c|=\OP{CZ}(c)-1.$ The differential $\partial$ satisfies the Leibniz rule $$\partial(\mathbf{a}\mathbf{b})=\partial(\mathbf{a})\mathbf{b}+(-1)^{|a|}\mathbf{a}\partial(\mathbf{b})$$ and is defined on the generators by a count of pseudoholomorphic polygons in $P$ with boundary on $\Pi_P(\Lambda),$ a single positive puncture at the input chord, and negative punctures at the output chords.

The Legendrian invariance comes from the fact that the stable-tame isomorphism type of the Chekanov--Eliashberg algebra is independent of the choice of almost complex structure and invariant under Legendrian isotopy. Here we will only need to be concerned with a slightly weaker notion, which is that of DG-homotopy; see e.g.~\cite[Lemma 3.14]{Ekhoka} for the definition.

An \emph{augmentation} is a unital DGA-morphism $\varepsilon \colon \mathcal{A}(\Lambda) \to \F,$ where the latter field is considered as a unital DGA with an empty set of generators. An augmentation is said to be \emph{graded} if all generators in degrees different from zero are mapped to zero by $\varepsilon$.

Not all Legendrians have Chekanov--Eliashberg algebras that admit augmentations; for instance in the Chekanov--Eliashberg algebra of a loose Legendrian the unit $1$ is a boundary, so it admits no augmentations. On the contrary, in accordance with the principles of symplectic field theory, exact Lagrangian fillings induce augmentations:
\begin{thm}[\cite{IntroSFT,RationalSFT,RationalSFT2}]
\label{thm:filling}
An exact Lagrangian filling $L$ of $\Lambda \subset P \times \R$ induces an augmentation $\varepsilon_L \colon \mathcal{A}(\Lambda) \to \Z_2$ of its Chekanov--Eliashberg algebra with coefficients in $\Z_2.$ If the filling is spin, then one obtains an augmentation with arbitrary coefficient. If the Maslov class of $L$ vanishes, then the augmentation is moreover graded.
\end{thm}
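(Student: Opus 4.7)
The plan is to define $\varepsilon_L$ directly as a count of rigid pseudoholomorphic discs with boundary on the completed filling. First I would attach the cylindrical end $[T,\infty)\times \Lambda$ to $L$ so as to obtain a properly embedded exact Lagrangian $\overline{L}\subset \R\times P\times \R$ in the symplectisation of $P \times \R$, and pick a tame almost complex structure $J$ that is cylindrical on the positive end and generic in the compact part. For each Reeb chord $c\in \Qc(\Lambda)$ let $\mathcal{M}(c;\overline{L})$ denote the moduli space, modulo biholomorphic reparametrisation, of $J$-holomorphic discs $u\colon (D^2,\partial D^2)\to(\R\times P\times\R,\overline{L})$ with one positive boundary puncture asymptotic to the Reeb chord strip over $c$. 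The Fredholm index of such a disc is controlled by $|c|$ together with a Maslov contribution from the relative homology class of the boundary. Define
\[
\varepsilon_L(c) \coloneqq \#\mathcal{M}_0(c;\overline{L}) \in \F,
\]
where $\mathcal{M}_0$ denotes the zero-dimensional stratum; the coefficients from $\F[H_1(\Lambda)]$ are handled by first taking weights valued in $\F[H_1(L)]$ via the map induced by $\Lambda \hookrightarrow L$, and then specialising to $\F$. Finally extend $\varepsilon_L$ to all of $\mathcal{A}(\Lambda)$ as the unital algebra morphism sending every word of length at least two to zero.

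The key step is to verify that $\varepsilon_L\circ\partial=0$ on generators. For this I would examine the one-dimensional moduli spaces $\mathcal{M}_1(c;\overline{L})$. By the SFT compactness theorem of Bourgeois--Eliashberg--Hofer--Wysocki--Zehnder, together with generic transversality, the boundary of the compactification consists of two-level broken configurations: a rigid pseudoholomorphic disc in the symplectisation $\R\times P\times \R$ with one positive puncture at $c$ and negative punctures at Reeb chords $c_1,\dots,c_k$ --- exactly the discs whose count defines $\partial c$ --- glued at each negative puncture $c_i$ to a rigid $J$-holomorphic disc on $\overline{L}$ with positive puncture at $c_i$. Counting the signed (or mod~$2$) number of boundary points then yields precisely the identity $\varepsilon_L(\partial c)=0$. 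Disc bubbling is excluded by the exactness of $\overline{L}$, and sphere bubbling is a codimension-two phenomenon that does not affect the count in generic one-parameter families.

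For the grading refinement, observe that when the Maslov class of $L$ vanishes the Fredholm index of any disc contributing to $\varepsilon_L$ equals $|c|$, so rigid discs can only appear when $|c|=0$; hence $\varepsilon_L$ sends generators of nonzero degree to zero. Passing from $\Z_2$-coefficients to an arbitrary coefficient ring requires coherent orientations on the moduli spaces, which can be fixed once a spin structure on $L$ is chosen, following the standard coherent-orientation scheme for Legendrian contact homology and gluing. The principal obstacle in making the outline fully rigorous is establishing simultaneous transversality for the moduli spaces of Fredholm index zero and one, which in turn relies on the somewhere-injectivity made available by the exactness of $\overline{L}$ together with sufficiently generic choices of almost complex structure in the compact part of the symplectisation; if multiply covered configurations were to obstruct naive transversality, one would invoke an appropriate virtual perturbation scheme.
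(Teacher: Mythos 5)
The paper does not prove this statement: it is quoted as a known result and attributed to the SFT references \cite{IntroSFT,RationalSFT,RationalSFT2}, so there is no internal proof to compare against. Your sketch is the standard argument from those references --- define $\varepsilon_L(c)$ by counting rigid punctured discs with boundary on the completed filling, prove $\varepsilon_L\circ\partial=0$ by identifying the boundary of the one-dimensional moduli spaces with two-level broken configurations, get the grading statement from the index formula when the Maslov class vanishes, and get signs from a spin structure --- and the outline is essentially right, including the correct handling of the Novikov coefficients by pushing forward along $H_1(\Lambda)\to H_1(L)$.

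There is, however, one genuine error in your setup. You extend $\varepsilon_L$ to $\mathcal{A}(\Lambda)$ as ``the unital algebra morphism sending every word of length at least two to zero.'' These two requirements are incompatible: a unital algebra morphism is determined by its values on generators and must satisfy $\varepsilon_L(c_1\cdots c_k)=\varepsilon_L(c_1)\cdots\varepsilon_L(c_k)$, which need not vanish for $k\geq 2$. More importantly, the multiplicative extension is exactly what your own compactness argument requires: the boundary of $\mathcal{M}_1(c;\overline{L})$ consists of an upper-level disc contributing a word $c_1\cdots c_k$ to $\partial c$ with a rigid disc in the filling glued at \emph{each} negative puncture, so the boundary point count is $\sum\langle\partial c, c_1\cdots c_k\rangle\,\varepsilon_L(c_1)\cdots\varepsilon_L(c_k)$. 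This equals $\varepsilon_L(\partial c)$ only if $\varepsilon_L$ is extended multiplicatively; with your stated extension the identity you derive would not be $\varepsilon_L(\partial c)=0$ and $\varepsilon_L$ would generally fail to be a DGA morphism. Replace the extension rule by the multiplicative one and the argument goes through as in the cited references.
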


Given an augmentation one can perform Chekanov's linearisation procedure \cite{DiffAlg} to obtain a complex $LCC^\varepsilon_*(\Lambda)$ which is an $\F$-vector spaces spanned by the Reeb chords. This was generalised in \cite{Bilinearised} by Bourgeois--Chantraine to the bilinearised complex $LCC^{\varepsilon_1,\varepsilon_2}_*(\Lambda)$ induced by two augmentations (in fact, they showed that the augmentations form an $A_\infty$-category). Observe that graded augmentations must be used if we want have a well-defined $\Z$-grading of the bilinearised complex.

The invariance of the bilinearised complex under Legendrian isotopy is more complicated to state than the invariance of the DGA, since it depends on the augmentations. Here follows the main invariance result that we need:
\begin{thm}[Corollary 5.12 \& Proposition 5.17 \cite{AugmentationsSheaves}]
\label{thm:dghomotopy}
The quasi-isomorphism class of the homology
$$LCH^{\varepsilon_1,\varepsilon_2}_*(\Lambda)\coloneqq H(LCC^{\varepsilon_1,\varepsilon_2}_*(\Lambda))$$
of the bilinearised Legendrian complex for a Legendrian knot $\Lambda \subset \R^3$ does not depend on the DG-homotopy classes of the involved augmentations $\varepsilon_i$, $i=1,2$.
\end{thm}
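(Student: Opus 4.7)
The plan is to deduce Theorem \ref{thm:dghomotopy} from the $A_\infty$-categorical structure on augmentations established by Bourgeois--Chantraine \cite{Bilinearised}: the augmentations of $\mathcal{A}(\Lambda)$ are the objects of an $A_\infty$-category whose morphism space from $\varepsilon_1$ to $\varepsilon_2$ is the bilinearised complex $LCC^{\varepsilon_1,\varepsilon_2}_*(\Lambda)$, with $\mu_1$ being the bilinearised differential and the higher $\mu_n$ defined by counting holomorphic discs with $n$ positive punctures whose intermediate negative punctures are capped off by the relevant augmentations. The desired invariance then reduces to a purely algebraic statement: if $\varepsilon_1 \simeq \varepsilon_1'$ and $\varepsilon_2 \simeq \varepsilon_2'$ are isomorphic as objects of the homotopy category of this $A_\infty$-category, then pre- and post-composition (via $\mu_2$) with the relevant isomorphisms induces a quasi-isomorphism between the morphism complexes, the required chain homotopies being provided by the higher $A_\infty$-operations.

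The key step is therefore to verify that DG-homotopic augmentations correspond to isomorphic objects in this augmentation category. Given a DG-homotopy between $\varepsilon_0$ and $\varepsilon_1$, i.e.~a degree $-1$ $(\varepsilon_0,\varepsilon_1)$-derivation $K\colon \mathcal{A}(\Lambda)\to\F$ satisfying $\varepsilon_0 - \varepsilon_1 = K\circ \partial$, I would extract a candidate closed morphism $\eta \in LCC^{\varepsilon_0,\varepsilon_1}_0(\Lambda)$ from the restriction of $K$ to the Reeb chord generators. A direct computation using the Leibniz rule for the bar differential, together with the homotopy equation satisfied by $K$, should yield $\mu_1(\eta) = 0$. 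A symmetric construction produces $\eta' \in LCC^{\varepsilon_1,\varepsilon_0}_0(\Lambda)$, and one must then verify that the $\mu_2$-products $\mu_2(\eta',\eta)$ and $\mu_2(\eta,\eta')$ represent the unit classes in the respective endomorphism homologies $LCH^{\varepsilon_i,\varepsilon_i}_0(\Lambda)$; the $A_\infty$-relations then promote these into homotopy inverses in the required categorical sense.

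The principal obstacle lies in the interplay between the purely algebraic notion of DG-homotopy and the geometric disc-counting that defines $\mu_1$ and the higher $\mu_n$. Translating the derivation $K$ into a closed morphism at the level of the bilinearised complex, and then matching its $\mu_2$-compositions against the units that themselves arise from counts of certain holomorphic discs, is where the essential work takes place; signs and gradings must also be tracked with care. The restriction to Legendrian knots $\Lambda \subset \R^3$ is convenient because transversality for the moduli spaces of holomorphic polygons underlying the $\mu_n$ is well-understood in that low-dimensional setting, so that the $A_\infty$-category structure on augmentations, and the categorical invariance we derive from it, rest on firm technical foundations.
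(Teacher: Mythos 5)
Your proposal cannot be compared against an argument in the paper itself: Theorem~\ref{thm:dghomotopy} is imported verbatim from \cite{AugmentationsSheaves}, so the relevant benchmark is the proof given there. Your overall strategy --- realise DG-homotopic augmentations as isomorphic objects of an augmentation category and deduce invariance of morphism homology by pre-/post-composition with the isomorphisms --- is indeed the strategy of that reference. But as written there is a genuine gap at the step where you ask $\mu_2(\eta',\eta)$ and $\mu_2(\eta,\eta')$ to represent \emph{unit} classes in the endomorphism homologies $LCH^{\varepsilon_i,\varepsilon_i}_0(\Lambda)$. The Bourgeois--Chantraine augmentation category of \cite{Bilinearised}, whose morphism complexes are (up to dualisation and degree shift) exactly the bilinearised complexes $LCC^{\varepsilon_1,\varepsilon_2}_*(\Lambda)$ appearing in the statement, is not cohomologically unital. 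Concretely, for the standard Legendrian unknot with its unique augmentation $\varepsilon$ one has $LCH^{\varepsilon,\varepsilon}_*\cong\F$ concentrated in degree $1$, so $LCH^{\varepsilon,\varepsilon}_0=0$: there is no unit class for your candidate products to hit, the notion of ``isomorphic objects'' is unusable in this category, and the composition argument cannot get off the ground. Relatedly, for graded augmentations a DG homotopy $K$ is supported on chords of degree $-1$, and the element of $LCC^{\varepsilon_0,\varepsilon_1}$ it singles out is in general not a cycle in the degree where an isomorphism would have to live.

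The missing idea is precisely the main construction of \cite{AugmentationsSheaves}: the \emph{unital} augmentation category $\mathcal{A}ug_+$, whose hom spaces enlarge the bilinearised complex by Morse-theoretic generators associated to a base point (equivalently, to a small push-off of $\Lambda$), with the unit represented by the additional degree-zero generator. It is in $\OP{Hom}_+$ --- not in $LCC^{\varepsilon_0,\varepsilon_1}$ --- that a DG homotopy between $\varepsilon_0$ and $\varepsilon_1$ produces a closed degree-zero morphism that is an isomorphism in the cohomology category (their Proposition~5.17), and the invariance of $H^*\OP{Hom}_+$ under isomorphism of objects (their Corollary~5.12) is then transported to the bilinearised homology via the exact triangle relating $\OP{Hom}_+(\varepsilon_1,\varepsilon_2)$, the bilinearised complex, and the Morse complex of $\Lambda$ (equivalently, by regarding the bilinearised complexes as a bimodule over the unital category). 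With that correction --- carry out the categorical argument in $\mathcal{A}ug_+$ and only then descend to $LCC^{\varepsilon_1,\varepsilon_2}$ --- your outline matches the cited proof; without it, the key step fails already for the unknot.
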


\section{Proofs of Theorems \ref{thm:normalbundle} and \ref{thm:filproductlooseproducttwist}}
\label{sec:proofs}

 Before showing the equivalence between twist spuns and products, we will give a slightly different realisation of the $S^k$-twist spun, $k\geq 1$. This presentation has the advantage that it can be performed in arbitrary contact manifolds and, in addition, that it exhibits the relation to Legendrian products more clearly.

For any fixed parametrised Legendrian $k$-sphere
$$f \colon S^k \to S \subset \R^{2k+1}$$
the subset
$$ P \times S \subset (P \times \R^{2k} \times \R,dz+\eta-ydx)$$
has a standard contact neighbourhood which can be identified with
\begin{align}
\label{neighPxcodb}
\psi_S \colon (P \times D_\epsilon T^*S^k \times [-\epsilon,\epsilon],dz+\eta-\eta_{S^k}) \hookrightarrow (P \times \R^{2k} \times \R,dz+\eta-y_idx_i)
\end{align}
by a contact-form preserving embedding that takes a point $(x,q) \in P \times 0_{S^k}$ to $(x,f(q)) \in P \times S.$ Here $\eta_{S^k}$ is the tautological one-form on some radius-$\epsilon$ codisc bundle $D_\epsilon T^*S^k$. See e.g.~\cite{Geiges} for a treatment of the standard contact neighbourhood theorem.

Assume that we are given a smooth $S^k$-family of Legendrians $\{\Lambda_\theta\}$ in $P$. After applying the contactomorphism $(\varphi^{-t}_{\eta},e^{-t}\cdot)$ induced by the negative Liouville flow in $P$ to the $\Lambda_\theta$, we may assume that the Legendrian suspension $\Sigma_{S^k}\{\Lambda_\theta\} \subset P \times T^*S^k\times \R$ (see Definition \ref{def:susp}) is contained inside the domain $P \times D_\epsilon T^*S^k \times [-\epsilon,\epsilon]$ of the contact embedding $\psi_S$ described in (\ref{neighPxcodb}) above. To see this, note that the aforementioned rescaling shrinks both the momentum coordinate in $T^*S^k$ and the $z$--coordinate of the trace. 

We consider the image of the suspension under the same embedding and thus obtain a different realisation of the twist spun inside $P \times \R^{2k} \times \R$ which we denote by $\Sigma_{\psi_S}\{\Lambda_\theta\}$. The following lemma is immediate.
\begin{lem}
\label{lem:spunproduct}
 Assume that $S>\Lambda$ is satisfied for a Legendrian sphere $S \subset \R^{2k+1}$ and a Legendrian submanifold $\Lambda \subset P \times \R$. After shrinking $\Lambda$ sufficiently by an application of the negative Liouville flow in $P$ as described above the twist spun $\Sigma_{\psi_S}\{\Lambda\} \subset P \times \R^{2k+1}$ of the constant family $\{\Lambda\}$ is well-defined, and moreover equal to the Legendrian product $\Lambda \boxtimes S \subset P \times \R^{2k+1}$. (Recall that $S>\Lambda$ implies that $\Lambda$ can be shrunk indefinitely without changing the Legendrian isotopy class of the product.)
\end{lem}

An important particular case is when the Legendrian sphere $S$ is taken to be the $k$-dimensional standard Legendrian sphere $W^k \subset \R^{2k+1}$ with a unique Reeb chord, which e.g.~can be constructed as the Legendrian lift of the Whitney immersion in $\R^{2k}$ (this is sometimes also called the Whitney sphere). When $k=1$, the Lagrangian projection is a self-transverse figure-8 curve that bounds a total of zero area; in general, the front projection is given by the rotationally symmetric ``flying saucer'' shown in Figure \ref{whitneyimmersion} in the case $k=2$.

\begin{figure}[h!]
\vspace{5mm}
\begin{center}
\labellist
\pinlabel $\Pi_{\OP{Fr}}(W^2)$ at 150 5
\pinlabel $\Pi_{\R^2}(W^1)$ at -15 5
\endlabellist
\includegraphics[height=3.2cm]{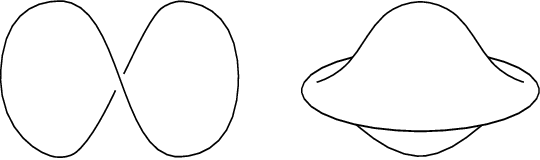}
\caption{The Lagrangian projection of $W^1$ (left) and the front projection of $W^2$ (right).}
\label{whitneyimmersion}
\end{center}
\end{figure}

 The following lemma allows us to to compare the version of the twist spun described in Section \ref{sec:twistspun} and the above version of the twist spun in the case $S=W^k$.
\begin{lem}
\label{lem:whitneyiso}
In the case $P=\R^{2n}$, the inclusion $\psi_{W^k}$ restricted to a sufficiently small neighbourhood of
$$\{0\} \times 0_{S^k} \times \{0\} \subset \R^{2n} \times T^*S^k \times \R$$
is contact isotopic to the corresponding restriction of the contact inclusion given by (\ref{neighbPxcodb}).
\end{lem}
\begin{proof}
The statement is a consequence of the fact that the image of the subcritical isotropic sphere $\{0\} \times 0_{S^k} \times \{0\}$ under $\psi_{W^k}$ and the image of the same under the inclusion (\ref{neighbPxcodb}) are related by an ambient contact isotopy. After such a contact isotopy, the two contact embeddings can thus been made to coincide on the subset $\{0\} \times 0_{S^k} \times \{0\}$. Since the contact isotopy that relates the isotropic spheres moreover can be taken to preserve the canonical framings of their symplectic normal bundles, a standard argument then finally allows us to isotope one contact embedding to the other by deforming it in some small neighbourhood of the same subset.

The needed subcritical isotopy can be explicitly constructed, using the
fact that both subcritical isotropic submanifolds arise as
subsets of the standard Legendrian sphere $W^{n+k} \subset \R^{2(n+k)+1}$. More
precisely, the subcritical embedding $\psi_{W^k}(\{0\} \times
0_{S^k} \times \{0\})$ has image given by the intersection
$$W^{n+k} \cap (\{0\} \times \R^{2k} \times \R)=\{0\} \times W^k$$
while the isotropic image of the same sphere under (\ref{neighbPxcodb}) is equal to the cusp-edge of the intersection
$$W^{n+k} \cap (\{0\} \times \R^{2(k+1)+1})=\{0\} \times W^{k+1}.$$
There clearly exists an isotopy through isotropic embeddings that relates these two spheres, which can be realised by a suitable family of embeddings
$$S^k \hookrightarrow W^{n+k} \subset \R^{2(n+k)+1}$$
with images contained inside the higher-dimensional Legendrian sphere $W^{n+k}$.
\end{proof}

\subsection{Proof of Theorem \ref{thm:normalbundle}}
\label{sec:proofs1}

By assumption, the subcritical isotropic submanifold $\{0\} \times \Lambda_2 \subset \R^{2n_1} \times \R^{2n_2+1}$ is formally isotopic to the standard sphere $\{0\} \times W^{n_2}$. We can now invoke the h-principle \cite[12.4.1]{hprinciple} due to Gromov, by which there exists a contact isotopy that takes the first isotropic embedding to the second. (This is the main technical ingredient of the proof.) As a consequence of this contact isotopy, by using the standard neighbourhood theorem from \cite[Theorem 2.5.8]{Geiges}, it readily follows that $\psi_{\Lambda_2}$ restricted to a sufficiently small neighbourhood of $\{0\} \times \Lambda_2$ is contact isotopic to $\psi_{W^{n_2}} \circ R$
for a contactomorphism
\begin{gather*}
\tilde{R} \colon \R^{2n_1} \times T^*S^{n_2} \times \R \to \R^{2n_1} \times T^*S^{n_2} \times \R,\\
((\mathbf{x},\mathbf{y}),(\mathbf{q},\mathbf{p}),z) \mapsto (R_{\mathbf{q}}(\mathbf{x},\mathbf{y}),(\mathbf{q},\mathbf{p}+H(\mathbf{x},\mathbf{y},\mathbf{q})),z+G(\mathbf{x},\mathbf{y},\mathbf{q})), \:\: R_{\mathbf{q}} \in U(n_1),
\end{gather*}
that lifts the symplectic suspension of a suitable family of unitary rotations. Here  $H(\mathbf{x},\mathbf{y},\mathbf{q})$ is Hamiltonian that generates the family $R_{\mathbf{q}}$ of linear rotations, uniquely determined by the requirement $H(0,0,\mathbf{q}) \equiv 0$, while $G(\mathbf{x},\mathbf{y},\mathbf{q})$ is the function that gives the contact lift to the contactisation, uniquely determined by $G(0,0,\mathbf{q}) \equiv 0$.

The reason why we in general cannot take $R_{\mathbf{q}} \equiv \id_{\R^{2n_1}}$ above, is that the contact isotopy that relates two isotropic submanifolds might not respect the relevant trivialisations of their symplectic normal bundles. The family of rotations is needed to correct this.

It now follows from Lemmas \ref{lem:spunproduct} and \ref{lem:whitneyiso} that $\Lambda_1 \boxtimes \Lambda_2$ is Legendrian isotopic to a twist spun as defined in Section \ref{sec:twistspun}. To that end, it might first be necessary to apply a rescaling of $\Lambda_1$ in order to confine it to a sufficiently small neighbourhood of $\{0\} \in \R^{2n_1+1}$.

\emph{Part (i):} When the Legendrian $\Lambda_2$ itself is formally isotopic to $W^{n_2}$ (and not merely stably formally Legendrian isotopic), we may assume that the canonical framings of the symplectic normal bundles
$$ \R^{2n_1} \times 0_{S^{n_2}} \to 0_{S^{n_2}}$$
are preserved by the initial ambient contact isotopy. In other words, the rotations can be taken to be constantly equal to the identity $R_{\mathbf{q}} \equiv \id_{\R^{2n_1}}$.

\emph{Part (ii):} What remains is to investigate how the Hamiltonian isotopy acts on the framing of the symplectic normal bundle of the isotropic embeddings in the case $n_2=1$. To that end it suffices to compare the Maslov index of $W^{n_2}=W^1$ with that of $\Lambda_2$; the former vanishes while the latter is equal to $2\cdot \OP{rot}(\Lambda_2)$ by assumption. The Hamiltonian isotopy that takes the subcritical isotropic submanifolds to each other thus compensates the difference in Maslov classes by an additional twisting of the trivialisation of the symplectic normal bundle. It follows that $R_{\theta}$ can be taken to be the family of rotations
$$(z_1,z_2,\ldots,z_{n_1}) \mapsto (e^{i\OP{rot}(\Lambda_2)\cdot\theta}z_1,z_2,\ldots,z_{n_1}), \:\: \theta \in S^1,$$
in $U(n_1)$ as sought.  There is an ambiguity of the rotation number $\OP{rot}(\Lambda_2) \in \Z$; its sign depends on the choice of an orientation of $\Lambda_2$. This ambiguity can however be ignored in the present construction, since there exists a Legendrian isotopy of $W^1$ to itself that reverses its orientation.
 
\qed

\subsection{Proof of Theorem \ref{thm:filproductlooseproducttwist}}

\emph{Part (i):} To check the exact Lagrangian condition we compute the pull-back
$$(\iota_1 \boxtimes \iota_2)^{\ast}e^t(dz+\eta_1+\eta_2)=\iota_1^*(e^{t}(dz_1+\eta_1))+e^{t\circ \iota_1}\iota_2^*(dz_2+\eta_2)=\iota_1^*(e^{t}(dz_1+\eta_1))$$
which clearly is exact with a globally constant primitive for $t \ll 0$ by the assumption that $L$ is exact.

\emph{Part (ii):} Let $R_{abc} \subset (P_1^{2n_1} \times \R,dz_1+\eta_1)$ be a stabilised (in the case $n_1=1$) or loose (in the case $n_1 \ge 2)$ neighbourhood of $\Lambda_1$. The standard Legendrian neighbourhood theorem \cite{Geiges} allows us to find a Darboux neighbourhood $U \subset (P_2^{2n_2} \times \R,dz_2+\eta_2)$ near any point of $\Lambda_2$ which is strictly contactomorphic to $\left([-\epsilon,\epsilon]^{2n_2+1},dz-\sum_i y_idx_i\right)$ for some sufficiently small $\epsilon>0$, in which $\Lambda_2$ moreover is equal to $\{y_i=0=z\}$. 

After rescaling the second factor by the positive Liouville flow $(\varphi_{\eta_2}^{t},e^{t}\cdot),$ we can in addition make the assumption that $U$ is symplectomorphic to the huge Darboux neighbourhood $\left([-e^{t/2}\epsilon,e^{t/2}\epsilon]^{2n_2+1},dz-\sum_i y_idx_i\right)$ for an arbitrarily large $ t \gg 0.$ (Here we use the assumption that $\Lambda_1 < \Lambda_2$ in order to infer that the rescaling induces a Legendrian isotopy of the product $\Lambda_1 \boxtimes \Lambda_2$.) After such a rescaling the product
$$ (R_{abc}\times (U \cap \{z=0\}), (R_{abc} \cap \Lambda_1) \times (U \cap \Lambda_2)) \subset (P_1 \times P_2 \times \R,\Lambda_1 \boxtimes \Lambda_2),$$
can be readily seen to become contactomorphic to a loose neighbourhood of $\Lambda_1\boxtimes \Lambda_2$ as sought.\qed

\begin{remark}
Theorem \ref{thm:filproductlooseproducttwist} in particular implies that
\begin{itemize}
\item if $\Lambda_1<\Lambda_2$, $\Lambda_1$ is loose and $\Lambda_2$ is fillable, then $\Lambda_1\boxtimes \Lambda_2$ is loose;
\item if $\Lambda_1<\Lambda_2$, $\Lambda_1$ is fillable and $\Lambda_2$ is loose, then $\Lambda_1\boxtimes \Lambda_2$ is fillable.
\end{itemize}
In other words, without any extra assumptions on the sizes of Reeb chords, Legendrian product construction neither preserve looseness, nor fillability of the components.
\end{remark}

\section{Structural results of DGAs of twist spuns}
\label{sec:spundga}

In this section we restrict our attention to  Legendrian tori inside $\R^5$ that arise as twist $S^1$-spuns of families of Legendrian knots inside the standard contact vector space $\R^3$. The Chekanov--Eliashberg algebra of such a twist spun was computed by Ekholm--K{\'a}lm{\'a}n in \cite[Theorem 1.1]{IsotopiesTori} in terms of the DGA of the knot and the DGA-endomorphism induced by the loop of knots, and we begin by recalling this result. This is the crucial ingredient in the proof of the below structural result Theorem \ref{thm:Kunnethformula} for the bilinearised Legendrian contact homology of such tori; see \cite{Bilinearised} for the definition of bilinearisation.

Denote by $(\mathcal{A}(\Lambda_0),\partial)$ the Chekanov--Eliashberg algebra of the knot where we use coefficients $\F$,
and let $$\Phi \colon (\mathcal{A}(\Lambda_0),\partial) \to (\mathcal{A}(\Lambda_0),\partial)$$ be the unital DGA quasi-isomorphism induced by the loop of Legendrians. The Legendrian torus twist spun $\Sigma\{\Lambda_\theta\} \subset \R^5$ has a Chekanov--Eliashberg algebra $(\mathcal{A}(\Sigma\{\Lambda_\theta\}),D)$ which after a suitable perturbation is of the following form.

{\bf Generators:} For each Reeb chord generator $x \in \mathcal{A}(\Lambda_0)$ there are two generators $x$ and $\hat{x}$ of $(\mathcal{A}(\Sigma\{\Lambda_\theta\}),D)$ where the degree of $x$ agrees in both algebras, while $|\hat{x}|=|x|+1.$

{\bf Differential:} For any $x \in \mathcal{A}(\Lambda_0)$ we have $D(x)=\partial(x),$ while for $\hat{x}$ we have
$$ D(\hat x)=\Phi(x)-x+\sum_{\mathbf{b}c\mathbf{d}} \langle \partial(x),\mathbf{b}c\mathbf{d} \rangle \Phi(\mathbf{b})\hat{c}\mathbf{d}.$$

Note that, even when $\Lambda_0$ has rotation number zero, it could be the case that the isotopy $\Lambda_\theta$ induces a shift of Maslov potentials; this is precisely the case when the Maslov class of the corresponding twist spun torus becomes non-trivial.

The above structure of the Chekanov--Eliashberg algebra of the twist spun immediately implies that the DGA of the knot sits included inside it as the sub-DGA generated by the ``generators without hats''.  It is not difficult to show that this also is the case when coefficients in the group ring of $H_1$ is used. The following is the main structural result that we need for the DGA and bilinearised Legendrian contact homology of a twist spun:
\begin{thm}
\label{thm:Kunnethformula}
 
Let $\Lambda_{\theta} \subset \R^3$, $\theta\in S^1$, be a loop of Legendrian knots. There exists an inclusion $\mathcal{A}_{\F[\mu^{\pm 1}]}(\Lambda_0) \subset \mathcal{A}_{\F[\mu^{\pm 1},\lambda^{\pm1}]}(\Sigma\{\Lambda_{\theta}\})$ of unital DGAs that extends the natural inclusion $\F[\mu^{\pm 1}] \subset \F[\mu^{\pm 1},\lambda^{\pm1}]$ of the group ring coefficients (for suitable identifications of $H_1$). Now assume that $\mathcal{A}_{\F[\mu^{\pm 1},\lambda^{\pm1}]}(\Sigma\{\Lambda_{\theta}\})$ admits an augmentation $\tilde{\varepsilon}.$
\begin{enumerate}
\item The augmentation $\tilde{\varepsilon}$ pulls back to an augmentation $\varepsilon$ of $\mathcal{A}_{\F[\mu^{\pm 1}]}(\Lambda_0)$ under the above inclusion of DGAs, which gives rise to an inclusion
$$ LCC^{\varepsilon}_{\ast}(\Lambda_0) \subset LCC^{\tilde{\varepsilon}}_{\ast}(\Sigma\{\Lambda_{\theta}\})$$
of linearised complexes. Furthermore, the linearised complex of the torus satisfies
$$ LCC^{\tilde{\varepsilon}}_{\ast}(\Sigma\{\Lambda_{\theta}\}) = \OP{Cone}(\psi)$$
for some (graded) chain-map
$$ \psi \colon LCC^{\varepsilon \circ \Phi,\varepsilon}_{\ast}(\Lambda_0) \to LCC^{\varepsilon}_{\ast}(\Lambda_0)$$
between bilinearised homology complexes (when the augmentation is graded);
\item The augmentation $\varepsilon$ from Part (1) and $\varepsilon \circ \Phi$, where $\Phi \colon \mathcal{A}(\Lambda_0) \to \mathcal{A}(\Lambda_0)$ denotes the quasi-isomorphism of Chekanov--Eliashberg algebras induced by the loop $\Lambda_\theta$, are DG-homotopic. In particular,
$$LCH^{\varepsilon \circ \Phi,\varepsilon}_{\ast}(\Lambda_0) \cong LCH^{\varepsilon}_{\ast}(\Lambda_0)$$
is satisfied; and
\item The unital DGA quasi-isomorphism $\Phi \colon \mathcal{A}(\Lambda_0) \to \mathcal{A}(\Lambda_0)$ of Chekanov--Eliashberg algebras induced by the loop $\Lambda_\theta$ extends to a unital DGA quasi-isomorphism $\tilde{\Phi}$, i.e.
$$
\xymatrix{
\mathcal{A}(\Sigma\{\Lambda_{\theta}\}) \ar[r]^{\tilde{\Phi}}& \mathcal{A}(\Sigma\{\Lambda_{\theta}\})\\
\mathcal{A}(\Lambda_0) \ar@{^{(}->}[u] \ar[r]_{\Phi} & \ar@{^{(}->}[u] \mathcal{A}(\Lambda_0) }
$$
under the above inclusion of DGAs.
\end{enumerate}
\end{thm}
\begin{remark}
\begin{itemize}
\item A related result in the case of generating family homology appeared in the work of Sabloff--Sullivan \cite[Propositions 5.4 and 5.5]{FamiliesLegendrianGeneratingFamilies}.
\item The map $\psi$ vanishes when $ \Lambda_\theta \equiv \Lambda_0$ is the constant family, which gives back the K\"{u}nneth formula for the ordinary $S^1$-spun.
\item For untwisted spherical spuns in arbitrary dimension, for augmentations that are induced by spuns of exact Lagrangian fillings, a K\"{u}nneth-type formula was established by Chantraine, Ghiggini and the authors in \cite{Floer_Conc}.
\end{itemize}
\end{remark}
\begin{proof}[Proof of Theorem \ref{thm:Kunnethformula}]
When coefficients are taken in $\F$ as opposed to the group ring of $H_1$, the existence of the inclusion of the DGA is immediate from the above presentation of the Chekanov--Eliashberg algebra of a twist spun. The refined result with group ring coefficients can also be seen to follow by the same analysis from \cite{IsotopiesTori}, as we now show.

We briefly recall the geometric correspondence between the discs that contribute to $D|_{\mathcal{A}(\Lambda_0)}$ and the discs that contribute to $\partial.$ It suffices to consider the Legendrian suspension inside $\R^3 \times T^*S^1$ instead of the twist spun inside $\R^5$, since the Chekanov--Eliashberg algebras are the same.

One starts by choosing a Morse function on $S^1$ with precisely two critical points in order to perturb the Legendrian; we assume that the minimum is located at $\theta=0$. After some care has been taken, the twist spun may be assumed to intersect the hypersurface
$$\R^3 \times \{\theta=0\} \subset \R^3 \times T^*S^1$$
transversely in the knot
$$\Lambda_0 \times \{(\theta,p)=0\} \subset \R^3 \times T^*S^1.$$
Moreover, there is an induced grading-preserving correspondence between the Reeb chord generators of $\mathcal{A}_{\F[\mu^{\pm 1},\lambda^{\pm1}]}(\Sigma\{\Lambda_{\theta}\})$ contained inside this hypersurface, and the Reeb chord generators of $\mathcal{A}(\Lambda_0)$; these are indeed the generators of the sought sub-DGA. The key point is that, for suitable choices, the pseudoholomorphic discs that contribute to $D(x)$ for these generators all live inside the symplectic hypersurface
$$\{(\theta,p)=0\} \subset \R^2_{xy} \times T^*S^1$$
and can be identified with the discs that contribute to $\partial(x)$. Using the same geometric correspondence of discs, it is now straight-forward to also obtain the identification of first homology classes of the boundaries of the disc, which are responsible for giving the contributions to the homology coefficients.

(1): The inclusion of linearised complexes is immediate from the existence of the unital inclusion of DGAs established above.

We proceed to exhibit the cone structure. Yet another direct consequence of the above expression for the DGA of a twist spun is that the quotient complex $LCC^{\tilde\varepsilon}_\ast(\Sigma\{\Lambda_\theta\})/LCC^{\varepsilon}_\ast(\Lambda_0)$ is isomorphic to the bilinearised complex $LCC^{\varepsilon \circ \Phi,\varepsilon}_\ast(\Lambda_0)$. (See \cite{Bilinearised} for the definition of bilinearised Legendrian contact homology.) The structure of a mapping cone is now also evident.

(2): The equations $\tilde{\varepsilon}\circ D(\hat{x})=0$, which are satisfied since $\tilde{\varepsilon}$ is an augmentation, together with the assumption that $\tilde{\varepsilon}(x)=\varepsilon(x)$ holds for the generators $x \in \mathcal{A}(\Lambda_0)$, are precisely the equations that give the sought DG-homotopy between $\epsilon$ and $\epsilon \circ \Phi$; see \cite[Lemma 3.14]{Ekhoka} for the definition of this relation. Theorem \ref{thm:dghomotopy} now implies that
$$LCH^{\varepsilon \circ \Phi,\varepsilon}_\ast(\Lambda_0) \cong LCH^{\varepsilon,\varepsilon}_\ast(\Lambda_0)=LCH^\varepsilon_\ast(\Lambda_0)$$
as sought.

(3): The twist spun can be rotated to yield an $S^1$-family of tori. More precisely, the loop of Legendrians $\{\Lambda_{\theta}\}_\theta$ sits inside an $S^1$-family of loops $\{\Lambda_{\tau+\theta}\}_\theta$ that is parametrised by $\tau \in S^1.$ We thus get a loop of the corresponding twist spuns that depends on the parameter $\tau \in S^1$.

The sought DGA quasi isomorphism $\tilde{\Phi}$ is the one associated to this Legendrian isotopy of twist spuns, as produced by the invariance proof of the Chekanov--Eliashberg algebra in \cite{LegendrianContactPxR,IsotopiesTori}. If we perturb each twist spun in the loop by the Morse function on $S^1$ above, so that $\mathcal{A}(\Lambda_\tau)$ is included as a sub-algebra generated by the chords in the hypersurface $\{\theta=0\}$ over the minimum for all $\tau \in S^1$, it readily follows from the analysis in the invariance proof that $\tilde{\Phi}$ restricts to $\Phi$ on this subalgebra. 
\end{proof}

Leverson \cite{Leverson} has shown that any graded augmentation of a knot must map the  coefficient $\mu \in \F[\mu^{\pm1}]=\F[H_1(\Lambda)]$ to  $\mu \mapsto -1$ (for a suitable choice of spin structure). Recall that the augmentation variety of a Legendrian torus is the algebraic closure of those points in $(\C^*)^2$, thought of as $\C$-algebra maps $\C[\mu^{\pm 1},\lambda^{\pm1}] \to \C,$ which extend to graded augmentations of its Chekanov--Eliashberg algebra. From Part (1) of Theorem \ref{thm:Kunnethformula} above we thus conclude that:
\begin{cor}
\label{cor:augvarytwistspun}
The augmentation variety of a twist-spun is contained inside the affine line $$\{\mu=-1\} \subset (\C^*)^2$$ after a suitable identification $H_1(\T^2) \cong \Z\mu\oplus \Z\lambda$ and choice of spin structure on the torus.
\end{cor}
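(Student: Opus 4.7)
The plan is to combine Part (1) of Theorem \ref{thm:Kunnethformula} with Leverson's normalisation result for graded augmentations of Legendrian knots. The key observation is that Part (1) provides a DGA inclusion
\[
\mathcal{A}_{\F[\mu^{\pm 1}]}(\Lambda_0) \hookrightarrow \mathcal{A}_{\F[\mu^{\pm 1},\lambda^{\pm 1}]}(\Sigma\{\Lambda_\theta\})
\]
that is compatible with the natural inclusion of coefficient rings $\F[\mu^{\pm 1}] \subset \F[\mu^{\pm 1},\lambda^{\pm 1}]$, where the coefficient variable $\mu$ on both sides corresponds to the meridian class of the underlying knot $\Lambda_0$, viewed as the $H_1$ generator of $\T^2$ coming from the fibre of $\Sigma\{\Lambda_\theta\}\to S^1$.

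First, I would let $\tilde{\varepsilon} \colon \mathcal{A}(\Sigma\{\Lambda_\theta\}) \to \C$ be any graded augmentation, and interpret a point $(\tilde{\varepsilon}(\mu),\tilde{\varepsilon}(\lambda))\in(\C^*)^2$ of the augmentation variety. Pulling back along the DGA inclusion yields a graded augmentation $\varepsilon \coloneqq \tilde{\varepsilon}|_{\mathcal{A}(\Lambda_0)}$ of the knot's Chekanov--Eliashberg algebra, and by construction $\varepsilon(\mu)=\tilde{\varepsilon}(\mu)$.

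Second, I would invoke Leverson's theorem: for the spin structure on $\Lambda_0 \cong S^1$ for which Leverson's sign conventions apply, every graded augmentation $\varepsilon$ satisfies $\varepsilon(\mu)=-1$. To match this with the torus, I would choose the identification $H_1(\T^2)=\Z\mu\oplus \Z\lambda$ so that $\mu$ is the meridian class of the knot factor (so that the inclusion on coefficient rings is literally $\mu\mapsto \mu$), and I would choose the spin structure on the torus whose restriction to the knot fibre is precisely Leverson's. This is possible because the spin structure on $\Sigma\{\Lambda_\theta\}$ can be prescribed independently on the two $S^1$-factors, and the DGA inclusion of Part (1) is induced by a geometric identification of Reeb chords sitting in a transverse slice, so signs on the knot factor restrict correctly.

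Combining these two ingredients gives $\tilde{\varepsilon}(\mu)=\varepsilon(\mu)=-1$, so every point of the augmentation variety lies on $\{\mu=-1\}$, and the augmentation variety is contained in that line. The main obstacle here is purely bookkeeping: verifying that the identification of $H_1$ classes coming from Part (1) is compatible with Leverson's spin-structure convention on the knot factor. Once that is checked, the corollary is immediate from the fact that augmentations pull back under DGA inclusions together with Leverson's normalisation.
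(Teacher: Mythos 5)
Your proposal is correct and follows essentially the same route as the paper: the text preceding the corollary deduces it precisely by pulling back a graded augmentation of the twist spun along the DGA inclusion of Part (1) of Theorem \ref{thm:Kunnethformula} and then applying Leverson's normalisation $\varepsilon(\mu)=-1$ for graded augmentations of knots. Your additional remarks on matching the meridian class and the spin structure are just the bookkeeping the paper leaves implicit in the phrase ``for a suitable choice of identification''.
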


\section{Proof of Theorem \ref{thm:productnotspunnottwistspun}}
\label{sec:productnottwist}
In this section we will analyse some of the simplest possible examples of Legendrian products $\Lambda_1 \boxtimes \Lambda_2$ of two Legendrian knots $\Lambda_1, \Lambda_2 \subset \R^3$ for which neither $\Lambda_1 < \Lambda_2$ nor $\Lambda_1 > \Lambda_2$ is satisfied. In particular, we will consider it from the point of view of fillability and also perform some partial computations of their Chekanov--Eliashberg algebras. Since Theorem \ref{thm:normalbundle} does not apply to products of this kind, it is a priori not clear whether such a product is a twist spun or not. Indeed, we will exhibit examples which are not Legendrian isotopic to any twist spun of a knot.

\subsection{The family of examples}

Consider the standard Legendrian unknot $\Lambda_1 \subset \R^3$ with a single Reeb chord $a$ of length $\ell(a)=1$ and grading $|a|=1$. We then proceed to construct the second Legendrian factor $\Lambda_2^{2r}$, which comes in a family that depends on the parameter $r = 0,1,2,3,\ldots$.

Start by considering a Legendrian unlink $\Lambda^{2r}_-$ that consists of two Legendrian unknots, each of $\OP{rot}=r$ and ${\tt tb}=-1-r$, where the link is symmetric under the rotation $(x,y,z) \mapsto (-x,-y,z)$ (or, equivalently, under a reflection of the front), and moreover disjoint from the hyperplane $x=0$. We pick representatives of this link for which each component of $\Lambda^{2r}_-$, indexed by $i=1,2$, has precisely a number $1+r$ of transverse chords: a number $r$ of chords $c^i_1,\ldots,c^i_r$ of length $1+\epsilon$ and a single chord $d^i$ of length $1+r$. The case $r=1$ is depicted on the right-hand side of Figure \ref{fig:surgery}. In general, one can start with the standard unknot $W^1$ with a single chord of length $1+r$, and then perform a number $r$ of stabilisations.

We then perform a cusp connect sum, as defined in \cite{ConnectedSum} by Etnyre--Honda, between the union $\Lambda^{2r}_-$ of two Legendrian unknots by connecting two cusp edges that face each other by a horizontal Legendrian arc $D$ on which the $y$-coordinate is constant; again see Figure \ref{fig:surgery} for the case $r=1$. We denote the resulting Legendrian unknot by $\Lambda_2^{2r}$. Note that the result of the connected sum is a Legendrian unknot that satisfies $\OP{rot}(\Lambda_2^{2r})=0$. (This unknot is obviously stabilised whenever $r>0$.)

 After the surgery the Reeb chords become absolutely graded; one computes $|c^i_j|=1$ and $|d^i|=2r+1$. The new Reeb chord $b$ produced by the cusp connected sum is then seen to be of degree $2r \in \Z$; we may assume that its length is arbitrarily small and equal to $\epsilon>0$. The Legendrian $\Lambda^2_2$ resulting from the connected sum can be seen on the right-hand side of Figure \ref{fig:NonspunProduct}.

\begin{figure}[t]
\centering \labellist \pinlabel $\Lambda_1$ at 35 70 \pinlabel $a$ at 55 104
\pinlabel $D$ at 297 90 \pinlabel $\Lambda^2_-$ at 210 15
\pinlabel $d^1$ at 243 150
\pinlabel $d^2$ at 353 150
\pinlabel $c^1$ at 198 107
\pinlabel $c^2$ at 398 107
\endlabellist
\includegraphics[height=4.5cm]{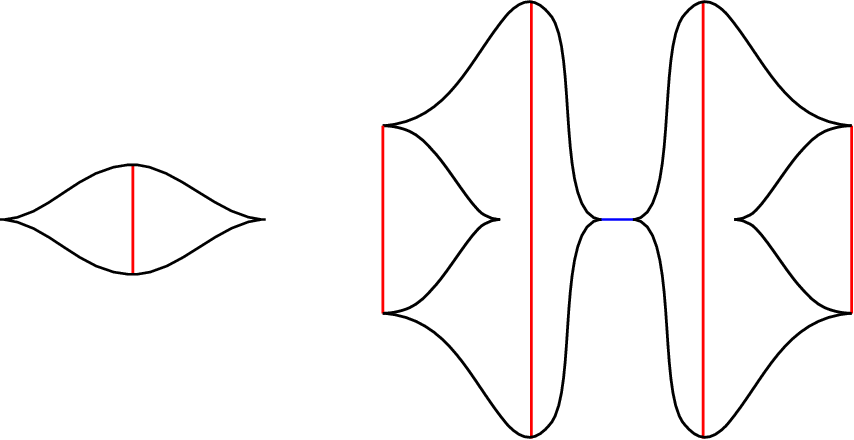}
\caption{Left: the front projection of $\Lambda_1$ with $\ell(a)=1$ and $|a|=1$. Right: the front
projection of the two Legendrian unknots $\Lambda^2_-$ together with the surgery disc $D$.} \label{fig:surgery}
\end{figure}

\begin{figure}[t]
\centering \labellist \pinlabel $\Lambda_1$ at 35 70 \pinlabel $a$ at 55 104
\pinlabel $b$ at 297 83 \pinlabel $\Lambda^2_2$ at 210 15
\pinlabel $d^1$ at 243 150
\pinlabel $d^2$ at 353 150
\pinlabel $c^1$ at 198 107
\pinlabel $c^2$ at 398 107
\pinlabel $\Lambda^2_2$ at 210 15
\endlabellist
\includegraphics[height=4.5cm]{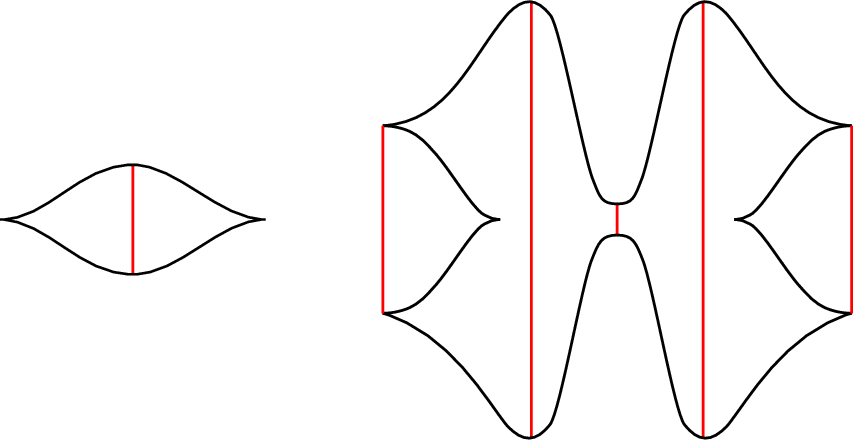}
\caption{Left: the front projection of $\Lambda_1$ with $\ell(a)=1$ and $|a|=1$. Right: the front
projection of $\Lambda^2_2$ with $\ell(b)<1$ and $|b|=2$. All other Reeb chords have lengths greater than one.} \label{fig:NonspunProduct}
\end{figure}

\subsection{Preliminary results}

We do not compute the full DGA of the products under consideration. However, since they are exact fillable as shown below, the quasi-isomorphism class of  the completions of their DGAs with respect to the word-length filtration  can be determined by the topology of the fillings by the work \cite{EkholmLekili} of Ekholm--Lekili. Instead of computing the entire DGA, we will make a much more modest computation that largely is based upon the lengths and degrees of the Reeb chords on the product.
\begin{prop}
 \label{prp:Degrees}
The Legendrian product $\Lambda_1 \boxtimes \Lambda_2^{2r}$ is Legendrian isotopic to a representative whose Reeb chords all are transverse and given as follows:
\begin{itemize}
\item $a$ and $A$ in degrees $|a|=|A|-1=1$ of lengths $\ell(a)=\ell(A)-\delta=1$;
\item $b$ and $B$ in degrees $|b|=|B|-1=2r$ of lengths $\ell(b)=\ell(B)-\delta=\epsilon;$
\item $c_{a+b}$ in degree $|c_{a+b}|=|a|+|b|+1=2+2r$ of length $\ell(c_{a+b})=\ell(a)+\ell(b)=1+\epsilon$; and
\item $c_{a-b}$ in degree $|c_{a-b}|=|a|-|b|=1-2r$ of length $\ell(c_{a-b})=\ell(a)-\ell(b)=1-\epsilon$;
\end{itemize}
where $\epsilon>0$ is small but fixed, and $0<\delta<\epsilon$ is sufficiently small.
\end{prop}
\begin{proof}
Recall that Lambert--Cole computed the degrees of the Reeb chords of the product in \cite{LegendrianProducts}; they can be determined by the chords on either component. For our product $\Lambda_1 \boxtimes \Lambda_2^{2r}$ we deduce that the set of Reeb chords after a small Legendrian perturbation are as follows:
\begin{itemize}
\item $a$ and $A$ in degrees $|a|=|A|-1=1$ of lengths $\ell(a)=\ell(A)-\delta=1$;
\item $b$ and $B$ in degrees $|b|=|B|-1=2r$ of lengths $\ell(b)=\ell(B)-\delta=\epsilon$;
\item $c^i_j$ and $C^i_j$ in degrees $|c^i_j|=|C^i_j|-1=1$ of lengths $\ell(c^i_j)=\ell(C^i_j)-\delta=1+\epsilon$ for $i=1,2$ and $j=1,\ldots,r$;
\item $d^i$ and $D^i$ in degrees $|d^i|=|D^i|-1=2r+1$ of lengths $\ell(d^i)=\ell(D^i)-\delta=1+r$ for $i=1,2$;
\item $c_{a+b}$ in degree $|c_{a+b}|=|a|+|b|+1=2+2r$ of length $\ell(c_{a+b})=\ell(a)+\ell(b)=1+\epsilon$;
\item $c_{a-b}$ in degree $|c_{a-b}|=|a|-|b|=1-2r$ of length $\ell(c_{a-b})=\ell(a)-\ell(b)=1-\epsilon$;
\item $c_{c^i_j+a}$ in degree $|c_{c^i_j+a}|=|c^i_j|+|a|+1=3$ of length $\ell(c_{c^i_j+a})=\ell(c^i_j)+\ell(a)=2+\epsilon$ for $i=1,2$ and $j=1,\ldots,r$;
\item $c_{c^i_j-a}$ in degree $|c_{c^i_j-a}|=|c^i_j|-|a|=0$ of length $\ell(c_{c^i_j-a})=\ell(c^i_j)-\ell(a)=\epsilon$ for $i=1,2$ and $j=1,\ldots,r$;
\item $c_{d^i+a}$ in degree $|c_{d^i+a}|=|d^i|+|a|+1=2r+2$ of length $\ell(c_{d^i+a})=\ell(d^i)+\ell(a)=2r+2$ for $i=1,2$; and
\item $c_{d^i-a}$ in degree $|c_{d^i-a}|=|d^i|-|a|=2r+1$ of length $\ell(c_{d^i-a})=\ell(d^i)-\ell(a)=2r$ for $i=1,2$.
\end{itemize}
Here $\epsilon>0$ is small but fixed, and $0<\delta<\epsilon$ is sufficiently small. All Reeb chords are moreover transversely cut out.

Except for the chords appearing under the first four bullet points, the Lagrangian projection of the chords above coincide with the Cartesian product of double points of the Lagrangian projections $\Pi_{\R^2}(\Lambda_1) \times \Pi_{\R^2}(\Lambda^{2r}_2)$ of the two factors. These products of chords are automatically transverse on the Legendrian product, given that the chords on each of the two factors are transverse.

On the other hand, the pair of chords appearing in each of the first four bullet points arise from a generic perturbation of an $S^1$-Bott family of chords of the form $x \times \Lambda^{2r}_2$ or $\Lambda_1 \times y$, where $x$ and $y$ are chords on each factor. The perturbation can be determined by a choice of Morse function on the $S^1$-family of chords, and the resulting generic chords appear arbitrarily close to the critical points of this Morse function; we chose the number of critical points to be equal to two.

When perturbing the family $a \times \Lambda^{2r}_2$ of chords we make an additional choice of Morse function, so that the resulting generic chords $a$ and $A$ on the perturbed product both become contained inside the hypersurface $\{x_2=0\} \subset \R^2_{x_1y_1} \times \R^2_{x_2y_2} \times \R_z$. Recall that $\{x_2=0\}$ is the axis of symmetry of the Legendrian knot $\Lambda^{2r}_2$.

It now suffices to show the following: all chords on the above
perturbation of the product, except those contained near the
hypersurface $\{x_2=0\}$, can be removed by a suitable Legendrian isotopy
which fixes a neighbourhood of the latter hypersurface. In the
remainder of the proof we argue how to construct such a Legendrian isotopy.

For any smooth real-valued function $\phi \colon \Lambda^{2r}_2 \to \R$ which satisfies $d\phi=0$ in a neighbourhood of all critical points of the function $x_2$, we can deform the Legendrian product $\Lambda_1 \boxtimes \Lambda^{2r}_2$ in the following manner. Consider the family of Legendrian tori $\Lambda_1 \boxtimes_t \Lambda^{2r}_2$ smoothly depending on the parameter $t \in \R$, uniquely determined by the front projections
$$(u_1,u_2) \mapsto (x_1(\iota_1(u_1))+t\cdot\phi(\iota_2(u_2)),x_2(\iota_2(u_2)),z_1(\iota_1(u_1))+z_2(\iota_2(u_2))) \in \R^2_{x_1x_2} \times \R_z,$$
where $\Lambda_1 \boxtimes_0 \Lambda^{2r}_2=\Lambda_1 \boxtimes \Lambda^{2r}_2$ thus is the original product. In order to reach our goal, we can take the function $\phi$ to vanish in a neighbourhood of the hypersurface $\{x_2=0\}$ (the axis of symmetry of $\Lambda^{2r}_2$) while it is equal to the $z_2$-coordinate near the starting and endpoints of each Reeb chord of $\Lambda^{2r}_2$ that is contained in the complement of the same hypersurface.

We start with the observation that all chords on the above Legendrians $\Lambda_1 \boxtimes_t \Lambda^{2r_r}_2$ that are contained inside the region $\{x_2 \neq 0\}$ live inside subsets of the form $\R^2_{x_1y_1} \times \{(x_2,y_2)=u \} \times \R_z$, where $u \in \Pi_{\R^2}(\Lambda^{2r}_2) \cap \{x_2 \neq 0\}$ is a double point of the Lagrangian projection of the second factor. The latter double points moreover correspond to chords of length strictly greater than $1=\ell(a)$.

From the previous paragraph, together with our choice of $\phi$, it is not difficult to see that there are no Reeb chords on $\Lambda_1 \boxtimes_t \Lambda^{2r}_2$ away from the hypersurface $\{x_2=0\}$ whenever $t \gg 0$ is sufficiently large.

It remains to show that varying $t$ yields a Legendrian isotopy. To that end, the crucial property that we need is the fact that the link $\Lambda_1 \cup \Lambda_1' \subset \R^3_{x_1y_1z_1}$ is an \emph{embedded} Legendrian link whenever $\Lambda_1'$ is obtained from $\Lambda_1$ by a translation in the $(x_1,z_1)$-plane which translates the $z_1$--coordinate by a distance strictly greater than $1=\ell(a)$. By this property, together with the fact that the Reeb chords on $\Lambda^{2r}_2$ away from $\{x_2=0\}$ have length strictly greater than $1=\ell(a)$, all chords on the above family of Legendrians can finally be seen to have a positive length for all $t \in \R$.
\end{proof}

Let $L$ denote the compact three-dimensional manifold with boundary $\partial L \cong \T^2$ obtained from $S^2 \times [0,1]$ by two oriented one-handle attachments.
\begin{prop}
\label{prp:filling}
The Legendrians $\Lambda_1 \boxtimes \Lambda_2^{2r}$ admit exact Lagrangian fillings diffeomorphic to $L$, whose Maslov class moreover vanishes when $r=0$.
\end{prop}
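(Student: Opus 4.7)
The plan is to construct the filling as an exact Lagrangian surgery resolution of the product cobordism built from the standard disk filling of $\Lambda_1$. Let $L_1 \cong D^2$ denote the standard exact Lagrangian disk filling of $\Lambda_1 \subset \R^3$. By Theorem \ref{thm:filproductlooseproducttwist}(i), the product
$$L_1 \boxtimes \Lambda_2^{2r} \looparrowright \R \times \R^5$$
is an immersed exact Lagrangian cobordism from $\emptyset$ to $\Lambda_1 \boxtimes \Lambda_2^{2r}$ which, as an abstract manifold, is diffeomorphic to the solid torus $D^2 \times S^1$.

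The next step is to count and analyse the interior double points of this immersion. A double point at height $t$ corresponds to a pair consisting of a Reeb chord of $L_1$ in the time slice $\{t\}\times \R^3$ together with a Reeb chord of $\Lambda_2^{2r}$ of the same length. For the standard disk $L_1$ the length of the chord in the slice varies continuously from $0$ at the ``birth'' height of the filling up to the asymptotic value $\ell(a)=1$ at $t=+\infty$. Among the Reeb chords of $\Lambda_2^{2r}$ only $b$ has length in the open interval $(0,1)$, so exactly one transverse self-intersection occurs, situated at the time slice where the disk chord has length $\ell(b)$; moreover this double point is associated to the Reeb chord $b$ of degree $|b|=2r$.

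I would then perform an exact Polterovich-type Lagrangian surgery at this unique double point inside the symplectization, using the extra Liouville direction to absorb the action discrepancy $\ell(b)$ between the primitives on the two local sheets. This yields an embedded exact Lagrangian filling $L'$ of $\Lambda_1 \boxtimes \Lambda_2^{2r}$. Topologically the surgery replaces a disjoint pair of three-balls (neighborhoods of the two preimages of the double point inside the solid torus) by a cylinder $S^2 \times [0,1]$; this is precisely the $0$-surgery on a pair of interior points in the connected three-manifold $D^2 \times S^1$, which amounts to an interior connect sum with $S^1 \times S^2$. Hence
$$L' \;\cong\; (D^2 \times S^1)\, \# \,(S^1 \times S^2),$$
and a direct verification of the invariants ($\partial L' = \T^2$, $\chi(L')=0$, $\pi_1(L')=F_2$, $H_2(L')=\Z$, together with the handle decomposition $h^0+2h^1+h^2$) identifies $L'$ with the manifold $L$ from the statement.

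For the Maslov class assertion, both the disk $L_1$ (filling a rotation-zero unknot) and $\Lambda_2^{2r}$ have vanishing rotation class, so the immersed cobordism $L_1 \boxtimes \Lambda_2^{2r}$ has vanishing Maslov class; the surgery step, however, introduces a Maslov shift controlled by the degree of the resolved chord, which is $|b|=2r$. Consequently $L'$ is Maslov-trivial precisely when $2r=0$. The principal technical point that must be justified is that a Polterovich-type surgery can in fact be carried out exactly inside the symplectization for a Reeb chord of nonzero action, by means of an exact Lagrangian handle of appropriate length constructed along the Liouville direction --- a standard fact in the theory of exact immersed Lagrangian cobordisms that I would invoke here, rather than reprove.
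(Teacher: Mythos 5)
Your route is genuinely different from the paper's, and the difference matters. The paper never resolves a double point: it uses the fact that $\Lambda_2^{2r}$ was \emph{constructed} as a cusp connected sum of the two-component unlink $\Lambda^{2r}_-$, fills the product $\Lambda_1\boxtimes\Lambda^{2r}_-$ by two embedded twist-spun solid tori (this is where Theorem \ref{thm:normalbundle} enters, since $\Lambda_1<\Lambda^{2r}_-$ before the connected sum creates the short chord $b$), and then concatenates with the product of $\Lambda_1$ and the pair-of-pants handle-attachment cobordism, which is \emph{embedded} by Theorem \ref{thm:filproductlooseproducttwist}(i) because the handle attachment can be confined to a small neighbourhood where all newly born chords stay shorter than $\ell(a)=1$. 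Everything in that argument is soft. Your argument instead accepts an immersed filling $L_1\boxtimes\Lambda_2^{2r}$ with one double point and removes it by an exact Polterovich surgery. Your analysis of the double point (exactly one, coming from the unique slice chord of $L_1$ of length $\ell(b)$ paired with $b$) and of the resulting topology $(D^2\times S^1)\#(S^1\times S^2)\cong L$ is correct, modulo the check that the self-surgery is orientation-compatible so that one obtains $S^1\times S^2$ rather than the twisted $S^2$-bundle; without that check the computation $H_2(L')\cong\Z$ does not follow.

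There are two genuine gaps. First, the exact surgery step is carrying more weight than the phrase ``standard fact'' suggests: resolving a double point of an exact Lagrangian so that the result is again exact is not a purely local operation, since the Polterovich handle must carry a flux equal to the difference of the primitive values at the two preimages; one must argue that a handle with the prescribed flux and the correct sign of resolution can be installed (here the symplectization direction does help, but this needs to be said precisely, and it is exactly the kind of argument the paper's proof is designed to avoid). Second, and more seriously, the Maslov class assertion is not proved. The class lives on $H_1(L')\cong\Z^2$; you handle the old generator correctly via rotation numbers, but for the new generator you only assert that ``the surgery introduces a Maslov shift controlled by the degree of the resolved chord, which is $|b|=2r$.'' That is precisely the statement requiring proof, and the precise relation between the grading of the resolved double point and the Maslov number of the loop through the handle is a nontrivial local computation (a priori it could be $|b|\pm1$, which would falsify the claimed equivalence with $2r=0$). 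In the paper this issue never arises because the Maslov class is read off directly from the framing twist of the two twist-spun solid tori, which is $\pm 2r$ by the proof of Theorem \ref{thm:normalbundle}. To complete your argument you would need to carry out the local Conley--Zehnder/Maslov computation for the surgery handle and verify that the answer on the new generator is indeed $\pm 2r$.
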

\begin{proof}
The standard Legendrian unknot $\Lambda_1$ admits an exact filling by a Lagrangian disc. Since $\Lambda_1< \Lambda^{2r}_-$, Part (i) of Theorem \ref{thm:filproductlooseproducttwist} implies that $\Lambda_1 \boxtimes \Lambda^{2r}_-$ admits an exact Lagrangian filling consisting of a disjoint union of two solid tori. The Maslov class of this solid torus vanishes if and only if $r=0$.

The handle-attachment cobordism $V$ from $\Lambda^{2r}_-$ to $\Lambda^{2r}_+$ which corresponds to the cusp connect sum, see \cite{Ambient} for the construction, can be taken to be contained in an arbitrarily small neighbourhood of the ambient surgery disc $D$ shown in Figure \ref{fig:surgery}. Part (i) of Theorem \ref{thm:filproductlooseproducttwist} then again shows that the product $\Lambda_1 \boxtimes V$ is an embedded exact Lagrangian cobordism from $\Lambda_1 \boxtimes \Lambda^{2r}_-$ to $\Lambda_1 \boxtimes \Lambda^{2r}_2.$

The sought exact filling is then given by the concatenation of the filling of $\Lambda_1 \boxtimes \Lambda^{2r}_-$ and the cobordism $\Lambda_1 \boxtimes V$.
\end{proof}

\begin{prop}
\label{prp:UniqueAug}
When $r \ge 0,$ the Chekanov--Eliashberg algebra of the Legendrian $\Lambda_1 \boxtimes \Lambda_2^{2r}$ has a graded augmentation. When $r>0$, this augmentation is moreover unique up to DGA homotopy and, for any pair of graded augmentations, it is the case that 
\begin{equation}
\label{eq:negdegrees}
LCH^{\tilde{\varepsilon}_1,\tilde{\varepsilon}_2}_{\ast}(\Lambda_1 \boxtimes \Lambda_2^{2r})=\begin{cases}
\F, & \ast = 1-2r;\\
0, & \text{otherwise},
\end{cases}
\end{equation}
 for nonpositive degrees $\ast \leq 0$.
\end{prop}
\begin{proof}
The exact Lagrangian filling provided by Proposition \ref{prp:filling} implies the existence of a (possibly ungraded) augmentation; see Theorem \ref{thm:filling} proven in \cite{RationalSFT,RationalSFT2}. In the case $r=0$, due to the fact that the filling has vanishing Maslov class, this augmentation is moreover graded.

In the case $r>0$ we need to use the degree and length computations from Proposition \ref{prp:Degrees}. After the isotopy in that proposition, we have no Reeb chord generators in degree zero. The uniqueness of the DG-homotopy class of graded augmentations now follows if we just can show that a single graded augmentation exists.

For the existence, we need argue that the canonical algebra map to $\F$ is an augmentation. A sufficient condition for this is that $\partial(x)=0$ is satisfied for all Reeb chord generators $x$ of degree $|x|=1$ (in particular this implies no nonzero constant terms). Indeed, by comparing lengths and degrees in Proposition \ref{prp:Degrees} it is the case that $\partial(x)$ only can contain constant terms. In fact, this constant term must be zero as sought, since otherwise the DGA would be acyclic, which is in contradiction with the existence of the (possibly ungraded) augmentation induced by the exact Lagrangian filling from Proposition \ref{prp:filling}.

Finally, the computation of the bilinearised Legendrian contact homology is a direct consequence of the degree computations in Proposition \ref{prp:Degrees} and the invariance of these homology groups.
\end{proof}

\subsection{The product is not a twist spun (Proof of Theorem \ref{thm:productnotspunnottwistspun})}
We show that the Legendrians $\Lambda_1 \boxtimes \Lambda_2^{2r}$ are not Legendrian isotopic to twist spuns whenever $r \ge 1.$ We believe that the statement is true also when $r=0,$ but in that case a more refined computation of the DGA would be necessary.

We argue by contradiction and assume that $\Lambda_1 \boxtimes \Lambda_2^{2r}$ is Legendrian isotopic to a twist spun. Part (2) of Theorem \ref{thm:Kunnethformula} implies that
\begin{equation}
\label{eq:lch}
LCH^{\tilde{\varepsilon}}_{*}(\Lambda_1 \boxtimes \Lambda_2^{2r}) \cong H(\OP{Cone}(\psi)),
\end{equation}
for a chain map $\psi$ that induces the graded automorphism
$$ [\psi]_* \colon LCH_*^\varepsilon(\Lambda) \to LCH_*^\varepsilon(\Lambda) $$
in homology. Observe that the homology group on the left-hand side of Formula \eqref{eq:lch} was computed earlier in non-positive degrees; by Formula \eqref{eq:negdegrees} in Proposition \ref{prp:UniqueAug} it is equal to $\F$ in degree $*=1-2r$ and vanishes in other non-positive degrees.

To conclude, the above cone structure induces a long exact sequence in homology which thus takes the form
$$ 0 \to LCH^\varepsilon_{1-2r}(\Lambda) \xrightarrow{ \delta_{1-2r}} LCH_{1-2r}^\varepsilon(\Lambda) \to H_{1-2r}(\OP{Cone}(\psi)) \to LCH_{-2r}^\varepsilon(\Lambda) \xrightarrow{ \delta_{-2r}} LCH_{-2r}^\varepsilon(\Lambda) \to 0.$$
Exactness implies that the connecting homomorphisms $\delta_{1-2r}=[\psi]_{1-2r}$ as well as $\delta_{-2r}=[\psi]_{-2r}$ are injective and surjective, respectively. By finite dimensionality they are both isomorphisms. However, since
$$H_{1-2r}(\OP{Cone}(\psi)) \cong \F \neq 0$$
holds, we arrive at a contradiction with the exactness of the above sequence. \qed

\section{Other examples of Legendrians that are not twist spuns}
\label{sec:other}

The two Legendrian tori constructed in \cite{LegendrianBSLagrangianTori}, corresponding to suitable threefold covers of the Clifford and Chekanov torus, are here shown to not be Legendrian isotopic to twist spuns. This is done by mere considerations of their augmentation varieties, while taking the structural result for the DGA of a twist spun from Part (1) of Theorem \ref{thm:Kunnethformula} into account.

\begin{figure}[h!]
\centering
\includegraphics[height=4cm]{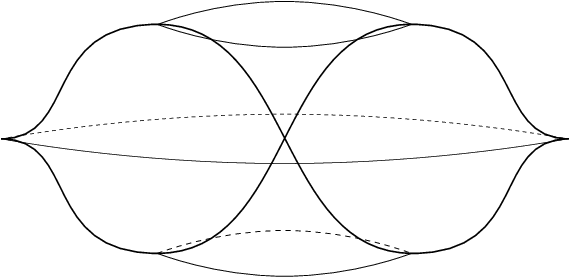}
\caption{Front projection of $\Lambda_{\OP{Cl}}$.}
\label{fig:Clif}
\end{figure}
We consider a conical special Lagrangian inside $\R^6$, whose intersection with the standard
contact sphere $S^5$ is a Legendrian torus which projects to $\C P^2$ as a threefold cover of
the monotone Clifford torus (this is the Legendrian link of the so-called Harvey--Lawson cone). After a Legendrian isotopy into a small contact Darboux ball, we get the Legendrian $\Lambda_{\OP{\OP{Cl}}}\subset J^{1}(\R)$ with the front projection shown in Figure \ref{fig:Clif}.
The computation of it appeared in \cite{KnottedLegendrianSurface} and \cite{LegendrianBSLagrangianTori}:

\begin{prop}\label{prop:augmvarcliftor}
For the Lie group spin structure and suitable choices of capping paths and basis $\{\mu,\lambda\}$ of $H_1(\Lambda_{\OP{Cl}})$,
the augmentation variety of $\Lambda_{\OP{Cl}}$ is equal to the one-dimensional complex pair of pants
$$\OP{Sp}(\C[\mu^{\pm1},\lambda^{\pm1}]/\langle 1+\lambda(1+\mu)\rangle).$$
\end{prop}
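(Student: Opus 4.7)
The plan is to compute the graded part of the Chekanov--Eliashberg DGA $\mathcal{A}(\Lambda_{\OP{Cl}})$ with Novikov coefficients in $\C[H_1(\Lambda_{\OP{Cl}})]\cong \C[\mu^{\pm1},\lambda^{\pm1}]$ directly from the front in Figure \ref{fig:Clif}, and read off its variety of graded augmentations. First I would list the Reeb chord generators as the vertical segments between sheets of the front, assign each its degree via the Conley--Zehnder index (equivalently, the difference of Maslov potentials on the two sheets at the endpoints, corrected by the standard $-1$ shift), and record for each the homology class in $H_1(\Lambda_{\OP{Cl}})$ carried by its capping path. A standard bookkeeping of the sheets of $\Lambda_{\OP{Cl}}$ singles out a unique generator $c$ of degree $1$ that will carry the defining relation, together with a handful of higher- and lower-degree generators that can be shown not to further constrain any graded augmentation.

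Second, I would compute $\partial c$ via Ekholm's Morse flow tree correspondence in $\R^2$, since $\Lambda_{\OP{Cl}}$ lives in a $J^1(\R^2)$--chart. For this front the relevant rigid trees are very short and can be enumerated by inspection from the diagram; one should recover three flow trees that contribute the monomials $1$, $\lambda$, and $\lambda\mu$ after fixing capping paths and using the Lie group spin structure to determine signs. The augmentation equation $\varepsilon(\partial c)=0$ then collapses to the single relation
\[
1+\lambda+\lambda\mu \;=\; 1+\lambda(1+\mu) \;=\; 0,
\]
which is precisely the Hori--Vafa superpotential of the monotone Clifford torus in $\C P^2$. One then checks that all remaining degree-$1$ generators have $\partial$ automatically annihilated by any graded augmentation (typically because of action/degree reasons), so no further cutting occurs, and the augmentation variety is exactly $\OP{Sp}(\C[\mu^{\pm1},\lambda^{\pm1}]/\langle 1+\lambda(1+\mu)\rangle)$.

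The main obstacle is not the tree count itself but the conventions around it: identifying the correct basis $\{\mu,\lambda\}$ of $H_1(\Lambda_{\OP{Cl}})$ so that the three flow trees read off as $1,\lambda,\lambda\mu$ rather than some monomial reparametrisation, pinning down capping paths, and checking that the Lie group spin structure produces the sign pattern of the above equation (compatibly with Leverson's constraint that graded augmentations must send each basic Novikov generator to $-1$ on knot-like pieces, as well as its generalisation to surfaces). Since Figure \ref{fig:Clif} and the above flow tree enumeration agree with the computations already carried out in \cite{KnottedLegendrianSurface} and \cite{LegendrianBSLagrangianTori}, I would in practice appeal directly to those references for the verification of conventions and present the proof as a transcription of their result into the normalisation used here.
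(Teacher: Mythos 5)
The paper offers no independent proof of this proposition: it simply records that the computation appears in \cite{KnottedLegendrianSurface} and \cite{LegendrianBSLagrangianTori}, which is exactly where your proposal also lands. Your sketch of the intermediate steps --- enumerating the Reeb chords from the front, computing $\partial$ of the relevant degree-one generator by rigid Morse flow trees in the $J^1(\R^2)$--chart, obtaining the single relation $1+\lambda+\lambda\mu=0$ with signs fixed by the Lie group spin structure, and checking that the remaining generators impose no further conditions --- is a faithful reconstruction of the argument in those references, so the approach and the resulting defining equation agree with the paper's.
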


\begin{figure}[h!]
\centering
\includegraphics[height=4cm]{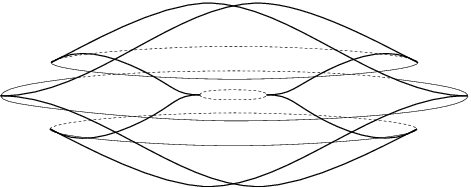}
\caption{Front projection of $\Lambda_{\OP{Ch}}$.}
\label{fig:Ch}
\end{figure}
We consider the Legendrian lift $\Lambda_{\OP{Ch}}$ of the threefold Bohr--Sommerfeld cover of the Chekanov torus placed inside a Darboux ball, the front projection of it is described in Figure \ref{fig:Ch}.
The augmentation variety of $\Lambda_{\OP{Ch}}$ has been computed by the authors in \cite{LegendrianBSLagrangianTori}:

\begin{prop}\label{prop:augmvarchtor}
For the Lie group spin structure and suitable choices of capping paths and basis $\langle\mu,\lambda\rangle$ of $H_1(\Lambda_{\OP{Ch}})$,
the augmentation variety of $\Lambda_{\OP{Ch}}$ is equal to the one-dimensional complex pair of pants
$$\OP{Sp}(\C[\mu^{\pm1},\lambda^{\pm1}]/\langle 1+\lambda(1+\mu)^{2}\rangle).$$
\end{prop}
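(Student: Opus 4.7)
The plan is a direct computation of the Chekanov--Eliashberg DGA $(\mathcal{A}_{\F[\mu^{\pm1},\lambda^{\pm1}]}(\Lambda_{\OP{Ch}}),\partial)$ from the front projection in Figure \ref{fig:Ch}, followed by extraction of the augmentation variety by elimination. First, I would enumerate the Reeb chord generators by reading off the crossings of the Lagrangian projection $\Pi_{\R^{2}}(\Lambda_{\OP{Ch}})$, which corresponds dually to the cusp--cusp and double-point pairings in the front. Using a Maslov potential compatible with the chosen capping paths, each generator is assigned a degree through $|c|=\OP{CZ}(c)-1$. Because we seek graded augmentations of a Legendrian torus, the relevant unknowns are the images $\varepsilon(c_i)\in\C$ of the degree-$0$ chords, while all other generators are forced to zero.

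Next I would compute the differentials on the degree-$1$ generators, which are the only generators giving rise to augmentation equations. This is a count of rigid pseudo-holomorphic polygons in $\Pi_{\R^{2}}(\Lambda_{\OP{Ch}})$ with a single positive puncture and negative punctures at degree-$0$ chords, each disc being weighted by its boundary homology class in $H_{1}(\Lambda_{\OP{Ch}})=\Z\mu\oplus\Z\lambda$ and by signs determined by the Lie group spin structure (which, thanks to Leverson's conventions, can be tabulated by hand). For a Legendrian in $J^{1}(\R)$ like $\Lambda_{\OP{Ch}}$ the disc count is tractable: one may apply Ekholm's Morse flow-tree technique to the front projection, thereby reducing the count to a combinatorial problem read off from Figure \ref{fig:Ch}. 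This is completely parallel to the Clifford computation in Proposition \ref{prop:augmvarcliftor} and to the analogous computations of \cite{KnottedLegendrianSurface,LegendrianBSLagrangianTori}.

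Each relation $\varepsilon\circ\partial(c)=0$ for a degree-$1$ generator $c$ yields one Laurent-polynomial equation in the unknowns $\varepsilon(c_{i})$ and in $\mu,\lambda$. Projecting the resulting affine variety in $(\C^{*})^{2}\times\C^{k}$ onto the $(\mu,\lambda)$-factor by eliminating the $\varepsilon(c_{i})$ then produces the augmentation variety. The final step is to verify that, after choosing the basis of $H_{1}(\Lambda_{\OP{Ch}})$ as prescribed by the statement, the elimination ideal is generated by $1+\lambda(1+\mu)^{2}$; the factor $(1+\mu)^{2}$ rather than $(1+\mu)$ is the distinguishing feature of the Chekanov cover as compared to the Clifford one, and reflects the different combinatorics of degree-$0$ chords visible in Figure \ref{fig:Ch}.

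The main obstacle is the careful bookkeeping, not the ideas: one must correctly enumerate every rigid disc with its signed Novikov weight, and then choose the basis $\{\mu,\lambda\}$ of $H_{1}(\T^{2})$ so that the defining polynomial takes exactly the stated form (any other basis would produce a monomial rescaling of the polynomial, describing a different subvariety of $(\C^{*})^{2}$). This is precisely the computation executed by the authors in \cite{LegendrianBSLagrangianTori}, from which the proposition may alternatively be quoted verbatim.
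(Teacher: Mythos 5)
The paper offers no proof of this proposition at all: it is quoted directly from \cite{LegendrianBSLagrangianTori}, exactly as you observe in your final paragraph. Your outline of the underlying computation (reading off the DGA from the front via Morse flow trees, extracting the augmentation equations from the degree-one differentials with Novikov weights and Lie-group-spin-structure signs, then eliminating the augmentation variables to land on $1+\lambda(1+\mu)^{2}$) is the correct and standard methodology carried out in that reference, so your proposal matches the paper's approach, with the caveat that the actual disc enumeration is deferred rather than executed --- just as it is in the paper itself.
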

Since, in particular, the augmentation varieties of $\Lambda_{\OP{\OP{Cl}}}$ and $\Lambda_{\OP{\OP{Ch}}}$ do not contain a component which is a two-punctured sphere, we can use Corollary \ref{cor:augvarytwistspun} to deduce that
\begin{cor}
Neither $\Lambda_{\OP{\OP{Cl}}}$ nor $\Lambda_{\OP{\OP{Ch}}}$ is the twist spun of a Legendrian knot.
\end{cor}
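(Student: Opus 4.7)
The plan is a short contradiction argument that combines Corollary \ref{cor:augvarytwistspun} with the explicit augmentation varieties from Propositions \ref{prop:augmvarcliftor} and \ref{prop:augmvarchtor}. Suppose for contradiction that $\Lambda \in \{\Lambda_{\OP{Cl}}, \Lambda_{\OP{Ch}}\}$ is the twist spun of some Legendrian knot. Then Corollary \ref{cor:augvarytwistspun} yields a basis $\{\mu', \lambda'\}$ of $H_1(\T^2)$ and a choice of spin structure for which the augmentation variety of $\Lambda$ lies inside the line $\{\mu' = -1\} \subset (\C^*)^2$. Translating back to the basis and spin structure used in Propositions \ref{prop:augmvarcliftor} and \ref{prop:augmvarchtor}, this line becomes the vanishing locus of some binomial $\mu^a \lambda^b - c$ with $(a,b) \in \Z^2$ coprime and $c \in \C^*$, which I will call a toric coset. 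Every toric coset is a smooth irreducible closed curve in $(\C^*)^2$ isomorphic to $\C^*$, i.e., a twice-punctured sphere.

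Next I would observe that the augmentation varieties from Propositions \ref{prop:augmvarcliftor} and \ref{prop:augmvarchtor} are themselves irreducible one-dimensional curves isomorphic to a \emph{thrice}-punctured sphere: solving the defining equations for $\lambda$ exhibits them as the images of the injective morphisms $\mu \mapsto (\mu, -(1+\mu)^{-1})$ and $\mu \mapsto (\mu, -(1+\mu)^{-2})$ with common domain $\C \setminus \{0, -1\}$. Since a toric coset and the augmentation variety would both be irreducible of dimension one, containment forces equality. This in turn would require the defining trinomial $1 + \lambda + \mu\lambda$ in the Clifford case (or the quadrinomial $1 + \lambda + 2\mu\lambda + \mu^2\lambda$ in the Chekanov case) to be proportional to some $\mu^a \lambda^b - c$ in $\C[\mu^{\pm 1}, \lambda^{\pm 1}]$. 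Multiplication by any unit of $\C[\mu^{\pm 1}, \lambda^{\pm 1}]$, which is a nonzero scalar times a monomial, preserves the number of monomial terms. Hence no such proportionality exists, and the sought contradiction follows.

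The only real bookkeeping task --- and it is quite mild, since all of the hard work has been absorbed into the preceding sections --- is confirming that a change of basis of $H_1(\T^2)$ together with a change of spin structure sends toric cosets to toric cosets. A basis change acts as a monomial automorphism $(\mu, \lambda) \mapsto (\mu^p \lambda^q, \mu^r \lambda^s)$ of $(\C^*)^2$ with $ps - qr = \pm 1$, pulling $\mu' - c$ back to $\mu^p \lambda^q - c$; a change of spin structure contributes only sign flips of the coordinates. Both operations visibly preserve the binomial nature of defining equations, so the main step reduces to the elementary monomial count outlined above.
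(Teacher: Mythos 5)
Your argument is correct and follows essentially the same route as the paper: combine Corollary \ref{cor:augvarytwistspun} with Propositions \ref{prop:augmvarcliftor} and \ref{prop:augmvarchtor}, observing that the irreducible one-dimensional augmentation variety, if contained in a curve isomorphic to $\C^*$, would have to coincide with it. The paper derives the final contradiction by counting punctures (a pair of pants is not a twice-punctured sphere) where you count monomial terms of the defining Laurent polynomials; the two discriminants are interchangeable.
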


\begin{remark}
By the result of Vianna \cite{Vianna16}, there exists an infinite family of different monotone Lagrangian tori inside $\C P^2$. Since all these tori have superpotentials whose Newton polytopes are nondegenerate triangles by the same author (which means that their zero loci have at least three punctures, see for example \cite[Section 1.5]{AmoebasAlgebraicVariety}), and since their superpotentials cannot have critical value equal to zero by \cite[Theorem 1.6]{Auroux06}, we expect that their threefold Bohr--Sommerfeld covers as constructed in \cite{LegendrianBSLagrangianTori} give an infinite family of Legendrians which are not twist spuns. This expectation is based on \cite[Conjecture 8.1]{LegendrianBSLagrangianTori} formulated by the authors, by which the augmentation variety is a certain covering space of the zero set of the superpotential, together with Proposition \ref{prp:UniqueAug}.
\end{remark}

\begin{remark}
In addition, we expect that none of the Legendrian tori considered in this section is a Legendrian product. However, we currently lack a structural understanding of the DGAs of Legendrian products that would allow us to attack this question.
\end{remark}

\bibliographystyle{plain}
\bibliography{references}
\end{document}